\newtheorem{thm}{Theorem}[section]   % Αρίθμηση συνεχόμενη (όχι κατά Θεώρημα, Λήμμα κ.λπ.)
\newtheorem{prop}[thm]{Proposition}
\newtheorem{lem}{Lemma}%[thm]
\newtheorem{Def}[thm]{Definition}%[section]
\newcommand{\oo} {{\omega}}
\newcommand{\bi} {{\beta}}
\newcommand{\ga} {{\gamma}}
\newcommand{\al} {{\alpha}}
\newcommand{\Ga} {{\varGamma}}
\newcommand{\ld} {{\ldots}}
\newcommand{\sm} {{\smallsetminus}}
\newcommand{\thi} {{\theta}}
\newcommand{\De} {{\varDelta}}
\newcommand{\la} {{\lambda}}
\newcommand{\el} {{\ell}}
\newcommand{\mi} {{\mu}}
\newcommand{\dis}{\displaystyle}
\newcommand{\ra}{{\rightarrow}}
\newcommand{\qb}{$\quad\blacksquare$}
\def\1{\it1\hspace*{-0.150cm}{\footnotesize{I}}}
\newcommand{\ca}{{\cal{A}}}
\newcommand{\cb}{{\cal{B}}}
\def\R{{\mathbb{R}}}
\def\C{{\mathbb{C}}}
\def\N{{\mathbb{N}}}
\def\N{{\Bbb N}}
\def\C{{\Bbb C}}
\def\1{{\bf 1}}
\begin{document}

\title*{A Solution of polynomial equations}
\titlerunning{A Solution of polynomial equations}
\author{N. Tsirivas}
\authorrunning{N.~Tsirivas }
\institute{N.~Tsirivas \at Diovouniotou 30-32, T.K. 11741, Athens, Greece\\
\email{tsirivas66@yahoo.com} }
\maketitle
\abstract{We present a method for the solution of polynomial equations. We do not intend to present one more method among several others, because today there are many excellent methods. Our main aim is educational. Here we attempt to present a method with elementary tools in order to be understood and useful by students and educators. For this reason, we provide a self contained approach. Our method is a variation of the well known method of resultant, that has its origin back to Euler. Our goal, in the present paper, is in the spirit of calculus and secondary school mathematics. An extensive discussion of the theory of zeros of polynomials and extremal problems for polynomials the reader can find in the books \cite{10} and \cite{13}.}\vspace*{0.2cm}
\noindent
{\em MSC (2010)}: 65H04\\
{\em Keywords}: Polynomial equation, resultant, Gr\"{o}bner bases.\bigskip\\
\noindent
{\bf Introduction}\smallskip

It is well known that many problems in Physics, Chemistry and Science lead generally to a polynomial equation.

In pure mathematics also, there are classical problems that lead to a polynomial equation.

Let us give two examples:

1) If we are to compute the integral $\dis\int^\bi_\al\dfrac{p(x)}{q(x)}dx$, where $\al,\bi\in\R$, $\al<\bi$, and $p(x),q(x)$ are two real polynomials of one variable, and $q(x)$ is a non-zero polynomial that does not have any root in the interval $[\al,\bi]$, then we are led to the problem of finding the real roots of $q(x)$.\smallskip

2) Let $n\in\N$, $a_i\in\R$ for $i=1,\ld,n$, where $\N,\R$ are the sets of natural and real numbers respectively.

We can consider the differential equation
\[
a_ny^{(n)}+a_{n-1}y^{(n-1)}+\cdots+a_1y+a_0=0,
\]
where $y$ is the unknown function.

In order to solve this simple equation we have to find all the roots of the polynomial
\[
p(x)=a_nx^n+a_{n-1}x^{n-1}+\cdots+a_1x+a_0.
\]
So, the utility to solve a polynomial equation, or in other words to find the roots of a polynomial is undoubted. This problem is a very old, classical problem in mathematics and Numerical Analysis, especially. For this reason, there exist many methods that solve it.

However, if a scientist wants to solve an equation for his work, it is sufficient to use programs as ``mathematica'' and ``maple'', nowadays. So, the utility of the problem has an other direction, which is the finding of better algorithms and programs. This is the main line of research in the area experts, nowadays.

We are moving in an other direction in this paper.

Our main aim is mainly educational.

In this paper we present a method of solving a polynomial equation with full details for educational reasons so that a student of positive sciences can improve the level of knowledge in the subject. First of all, let us state our problem. We denote $\C$ as the set of complex numbers. Let $n\in\N$ and $a_i\in\C$ for $i=0,1,\ld,n$. We then consider the polynomial
\[
p(z)=a_nz^n+a_{n-1}z^{n-1}+\cdots+a_1z+a_0,
\]
that is a polynomial of one complex variable $z$ with complex coefficients. We suppose that $a_n\neq0$. The natural number $n$ is called the degree of $p(z)$ and it is denoted by $degp(z)=n$. The number $a\in\C$ is a root of $p(z)$ when it is applicable: $p(a)=0$. Our problem is to find all the roots of $p(z)$, or in other words to solve the equation, $p(z)=0$. Polynomials are simple and specific functions that have the following fundamental property:\medskip\\
\noindent
{\bf Fundamental Theorem of Algebra}. Every polynomial of one complex variable with complex coefficients and a degree greater or equal to one has at least one root in $\C$.

This result is central. It is the basis of our method.

However, even if this theorem is fundamental, its proof is not trivial. Its simplest proof comes from complex analysis that many students do not learn in university. In the appendix we give one of the simplest proofs of the fundamental Theorem.

Many of the best methods of our problem are iterative. They are based on the construction of specific sequences that approach to the roots of the supposed polynomial. Our method here uses algebra as much as possible, and when algebra cannot go further, analysis takes its role in solving the problem. Here we do not deal with the problem of speed of convergence. We use numerical analysis as little as we can. It is sufficient for us to use the simplest method in order to find a root in a specific real open interval, the bisection method.

 Most of the books on numerical analysis describe the bisection method with details. For example see (\cite{8}, \cite{11}).

There are some formulas that provide bounds of the roots of a polynomial. A. Cauchy had given such a bound, see \cite{8}. In the frame of our method we provide such a bound.

There are some results that give information about the number of positive or real roots, for example Descart's law of signs and Sturm's sequence \cite{11}. A basic problem is to find disjoint real intervals so that every one of them contains one root exactly. There are, also, many methods for this.

Let us describe now, roughly, the stages of our method.

1) In the first stage we find all the real roots of a polynomial. For this reason we are based on two results. First of all the bisection method and secondly by the following result:

If we have a polynomial $p$ of one real variable with real coefficients with a degree greater or equal to one for which we know the roots of $p'$, then using the bisection method we can find all the real roots of $p$.

The first stage is simple. It uses only elementary knowledge and it is also convenient for students of secondary school!

We think that it is very useful for students of secondary school to know a method that find all the real roots of an arbitrary real polynomial with their knowledge base.

2) In the second stage, we provide a method that gives all the real roots of a system of the form:
\[
\left\{\begin{array}{c}
                                   p(x,y)=0 \\ [2ex]
                                   q(x,y)=0
                                 \end{array}\right. \eqno{\mbox{(A)}}
\]
where $p(x,y),q(x,y)$ are polynomials of two real variables $x$ and $y$ with real coefficients. Our method here is a variation of the well known method of resultant (see \cite{6}, \cite{13}), that has its origin in Euler. With this method the solution of the above system $A$ is reduced to the first stage. As in the first stage, the second stage is also convenient for students of secondary school, (except for Theorem 3.17 in our prerequisites).

3) In the third stage we show that the solution of our problem is reduced to the second stage.

So, roughly speaking, our main aim in this paper is to present a method that is in the frame of the usual lessons of calculus in secondary school or in university and present it with all the necessary details in order for it to be understand by students.

As for the notation. Let $p(x,y)$ be a polynomial of two real variables $x$ and $y$, with real coefficients. We denote $deg_xp(x,y)$ the greatest degree of $p(x,y)$ with respect to $x$ and $deg_yp(x,y)$ the greatest degree of $p(x,y)$ with respect to $y$. If $deg_xp(x,y)\ge1$ and $deg_yp(x,y)\ge1$, we call the polynomial $p(x,y)$ a pure polynomial. If $p(z)$, $q(z)$ are two complex polynomials, we write $p(z)\equiv q(z)$, when they are equal by identity. We also write $p(z)\equiv0$, when $p(z)$ is equal to zero polynomial by identity. We write $p(z)\not\equiv q(z)$ when, polynomials $p(z)$, $q(z)$, are not equal by identity and $p(z)\not\equiv0$, when $p(z)$ is not the zero polynomial.

There are many methods and algorithms to the solution of polynomial equations. Some of them are very old like the methods of Horner, Graeffe and Bernoulli, whereas today there are some others like the methods of Rutishauser, Lehmer, Lin, Bairstow, Bareiss and many others. Another method, similar to Bernoulli method is the QD method. A classical and popular method today is that of Muller. It is a general method, not only for polynomials.

The interested reader can find the details of some of the above methods in the books of our references, see \cite{1}, \cite{3}, \cite{4}, \cite{7}, \cite{9}, \cite{10}, \cite{11} and \cite{12}. As we said there exist many algorithms and programs to our problem.

One of the best is the subroutine ZEROIN. One can find the details of this program in \cite{4}.

As we said formerly, the basis of our method is the resultant (or eliminent). With this method we can convert a system of polynomial equations in one equation with only one unknown!

Theoretically, we can succeed in that, but the complexity of calculations is enormous, so its value today is only for polynomial equations with a low degree, and is used as a theoretical tool. For details of the resultant see \cite{6}, \cite{12}. Apart from this there are some cases where the resultant fails. This can happen, for example, when we have to solve a system of two equations with two unknowns and one of the two equations is a multiple of the other, and the system has a finite number of solutions. See, for example, the equation: $(x^2-1)^2+(y^2-2)^2=0$, that has the set of solutions
\[
L=\{(1,\sqrt{2}), (1,-\sqrt{2}), (-1,\sqrt{2}), (-1,-\sqrt{2})\}.
\]
We describe with details how we handle these cases in our method here. An alternate method for our problem is to solve it with Gr\"{o}bner bases. Gr\"{o}bner bases is a method that was developed in 1960 for the division of polynomials with more than one variables. With Gr\"{o}bner bases we can also convert a system of equations in an equation with only one unknown, as the resultant does. This is the main application of Gr\"{o}bner bases. This can be done in most cases.

However, there are some cases where Gr\"{o}bner bases fail to succeed in the above, like the above case.

For Gr\"{o}bner bases, see \cite{2}. Many books of secondary school contain the elementary theory of polynomials and Euclidean division that we refer to in our prerequisites.

The structure of our paper is as follows:

In the first paragraph we give a roughly description of our method. In the second paragraph we give the complete description of our method. In the third paragraph we collect all the prerequisites tools of our method from Algebra and Analysis and we present them with all the necessary proofs, especially for results that someone cannot find easily in books.

Finally, in the last paragraph 4 (Appendix) we give one of the simplest proofs of the fundamental Theorem of Algebra that one cannot find easily in books.

We, also, give a short description of the solution of binomial equation: $x^n=a$, where $n\in\N$, $n\ge2$ and $a$ is a positive number.
\section{General description of the solution of our problem} \label{sec1}
\noindent

For methodological reasons, we divide the solution of our problem in three stages.
\subsection{First stage} In this stage we find all the real roots of the polynomial equation
\[
a_vx^v+a_{v-1}x^{v-1}+\cdots+a_1x+a_0=0,
\]
where $a_i\in\R$, for every $i=0,1,\ld,v$, where $v\in\N$.
\subsection{Second stage} Let $p_1(x,y)$, $p_2(x,y)$ be two polynomials of two real variables $x$ and $y$ whose coefficients are in $\R$. We consider the system of equations.
\[
\left\{\begin{array}{cc}
         p_1(x,y)=0 & (1) \\[2ex]
         p_2(x,y)=0 & (2)
       \end{array}\right. \eqno{\mbox{(A)}}
\]
Let $L_A$ be the set of solutions of the above system (A), in $\R^2$. That is, we consider the set
\[
L_A:=\{(x,y)\in\R^2|p_1(x,y)=0 \ \ \text{and} \ \ p_2(x,y)=0\},
\]
of solutions of the above system (A), in $\R^2$. In the second stage we find the set $L_A$ under the following supposition (S)\medskip\\
(S): {\bf Supposition}: We suppose that the set $L_A$ is finite.

That is, we solve the above system (A), in $\R^2$, in the case of supposition (S) holds. We note, that we succeed the second stage using first stage.
\subsection{Third stage} In the third stage we completely solve our initial problem of finding all the roots of the polynomial equation $a_nz^n+a_{n-1}z^{n-1}+\cdots+a_1z+a_0=0$, where $n\in\N$,\linebreak $a_i\in\C$, for every $i=0,1,\ld,n$, using the previous two stages.

The first stage is the analytical part, whereas the second and third stages are the algebraic parts of our method. The prerequisites of our method are few. Elementary calculus and the elementary linear algebra of secondary school are enough, except only for a specific case, where we use Theorem 3.17 from our prerequisites, (a very well-known result from calculus of several variables).

In the following paragraph, we give the complete description of our method.
\section{Complete description of our method}\label{sec2}
\subsection{First Stage} Let $a_i\in\R$, for $i=0,1,\ld,v$, $v\in\N$ and a polynomial
\[
p(x)=a_vx^v+a_{v-1}x^{v-1}+\cdots+a_1x+a_0,
\]
where $a_v\neq0$, so $deg p(x)=v$.

Here we find all the real roots of $p(x)$. If $v=1$, or $v=2$ we know how to find the real roots of $p(x)$ from secondary school. Let us suppose that $v\ge 3$. We find all the real roots of $p'$ (if any) and then we find the roots of $p$ by applying basic Lemma 3.8 or Corollaries 3.9, 3.10.

More generally, we suppose that $p$ has degree $v\in\N$, $v\ge3$. We consider polynomials $p',p'',\ld,p^{(v-3)}$ $p^{(v-2)}$. Polynomial $p'$ has degree $v-1$, $p''$ has degree $v-2$, polynomial $p^{(v-2)}$ has degree 2.

We find the roots of $p^{(v-2)}$ (if any). After using basic Lemma 3.8, or Corollaries 3.9, 3.10 we find the roots of $p^{(v-3)}$ and going inductively, after a finite number of steps, we find the roots of $p'$ and finally in the same way the roots of $p$, and thus we complete our first stage.
\subsection{Second stage} We will now consider the system of two polynomials $p_1(x,y),p_2(x,y)$ of two real variables $x$ and $y$ with coefficients in $\R$. We solve the system (A) where
\[
\left\{\begin{array}{cc}
         p_1(x,y)=0 & (1) \\[2ex]
         p_2(x,y)=0 & (2)
       \end{array}\right. \eqno{\mbox{(A)}}
\]
We solve system (A) with the following \medskip\\
{\bf Supposition}: We suppose that system (A) has a finite number of solutions, that is, the set
\[
L_A=\{(x,y)\in\R^2|p_1(x,y)=p_2(x,y)=0\}
\]
is non-void and finite.

Firstly, we notice that one of the polynomials $p_1(x,y)$, $p_2(x,y)$, at least, is non zero, or else if $p_1(x,y)\equiv p_2(x,y)\equiv0$ for every $(x,y)\in\R$, then we have $L_A=\R^2$, that is false because the set $L_A$ is finite. We will examine some cases.

First of all, we suppose that at least one of the polynomials is of one variable only. We can distinguish some cases here. Let $p_1(x,y)\equiv q_1(x)$, $p_2(x,y)\equiv q_2(x)$. Then, we solve the equations $q_1(x)=0$ and $q_2(x)=0$ with the method of the first stage and after we conclude that set $L_A$ is the set of all $(x,y)$, where $x$ is one of the common solutions of equations $q_1(x)=0$ and $q_2(x)=0$ and $y\in\R$, that is $L_A$ is an infinite set, which is false by our supposition. So this case itself cannot occur.
Similarly, we can't have the case where $p_1(x,y)\equiv r_1(y)$ and $p_2(x,y)\equiv r_2(y)$. Now we consider the case where:
\[
p_1(x,y)\equiv q_1(x) \ \ \text{and} \ \ p_2(x,y)\equiv q_2(y).
\]
Then, we can solve the equations $q_1(x)=0$ and $q_2(y)=0$ with the method of the first stage, and we find the finite sets $A_1=\{\rho_1,\rho_2,\ld,\rho_v\}$ and $B_1=\{\la_1,\la_2,\ld,\la_m\}$, $A_1\cup B_1\subseteq\R$ where $A_1$ is the set of roots of $q_1$ and $B_1$ is the set of roots of $q_2$, $v,m\in\N$. Then, we have $L_A=\{(\rho_i,\la_j), i=1,\ld,v, j=,\ld,m\}$. In a similar way, we can solve the system A, when $p_1(x,y)=r_1(y)$ and $p_2(x,y)=r_2(x)$, for some polynomials $r_1(y), r_2(x)$.

Now, we consider the case where $p_1(x,y),p_2(x,y)$ is two pure polynomials.\medskip

(i) The simplest case is when $deg_yp_1(x,y)=deg_yp_2(x,y)=1$. Then we have:
\[
\begin{array}{lc}
  p_1(x,y)=\al_1(x)y+\al_2(x) & \text{and} \\ [2ex]
  p_2(x,y)=\bi_1(x)y+\bi_2(x), &
\end{array}
\]
where $\al_1(x),\al_2(x),\bi_1(x),\bi_2(x)$ are some polynomials of real variable $x$ only and $\al_1(x)\not\equiv0$ an $\bi_1(x)\not\equiv0$, because $p_1(x,y)$, $p_2(x,y)$ are pure polynomials.

So we have to solve the system:
\[
\left\{\begin{array}{cc}
         \al_1(x)y+\al_2(x)=0 & (3) \\[2ex]
         \bi_1(x)y+\bi_2(x)=0 & (4)
       \end{array}\right.
\]
We can distinguish some cases here. There exists a $(x_0,y_0)\in L_A$, so that:
\begin{enumerate}
\item[1)] $\al_1(x_0)=\bi_1(x_0)=0$. Then with (3) and (4), we get: $\al_2(x_0)=\bi_2(x_0)=0$. We get $(x_0,y)\in L_A$ for every $y\in\R$, that is false because $L_A$ is finite. So, this case can not occur.
\item[2)] $\al_1(x_0)=0$ and $\bi_1(x_0)\neq0$. Then with (4) we take:

    $y_0=-\dfrac{\bi_2(x_0)}{\bi_1(x_0)}$ (5). With (3) we have: $\al_2(x_0)=0$.

    So, in this case we find the common roots of polynomials $\al_1(x)$ and $\al_2(x)$, and for every common root $x_0$ of $\al_1(x)$ and $\al_2(x)$, so that $\bi_1(x_0)\neq0$, the couple $(x_0,y_0)\in L_A$, where $y_0$ is given from (5). Of course we find the real roots of polynomials $\al_1(x)$ and $\al_2(x)$ with the method of our first stage. In a similar way we find the roots $(x_0,y_0)\in L_A$, so that, $\al_1(x_0)\neq0$ and $\bi_1(x_0)=0$.
\item[3)] $\al_1(x_0)\neq0$ and $\bi_1(x_0)\neq0$.

Here, we have some cases:
\begin{itemize}
\item[(i)] $\al_2(x_0)=\bi_2(x_0)=0$. Then with (3), (4) and our supposition, we get: $y=0$. So, in this case we find the common roots of polynomials $\al_2(x)$ and $\bi_2(x)$, so that they are not roots of polynomials $\al_1(x)$ and $\bi_1(x)$, with the method of the first stage. If $x_0$ is such a root, that is: $\al_2(x_0)=\bi_2(x_0)=0$ and $\al_1(x_0)\neq0$ and $\bi_2(x_0)\neq0$, then: $(x_0,0)\in L_A$.
\item[(ii)]\ \  $\al_2(x_0)=0$ and $\bi_2(x_0)\neq0$.

Then, with (3), and the facts $\al_2(x_0)=0$ and $\al_1(x_0)\neq0$, we get: $y=0$. Then, because $y=0$, by (4) we get $\bi_2(x_0)=0$, that is a contradiction by our supposition. So, this case cannot occur.
\item[(iii)]\ \  $\al_2(x_0)\neq0$ and $\bi_2(x_0)=0$. As in the previous case (ii), this case cannot occur.
\item[(iv)] \ \ $\al_2(x_0)\neq0$ and $\bi_2(x_0)\neq0$.

Then, with (3) and (4) we get:\smallskip

$y_0=-\dfrac{\al_2(x_0)}{\al_1(x_0)}$ (6) and $y_0=-\dfrac{\bi_2(x_0)}{\bi_1(x_0)}$ (7).\smallskip

With (6) and (7) we get:\smallskip

$-\dfrac{\al_2(x_0)}{\al_1(x_0)}=-\dfrac{\bi_2(x_0)}{\bi_1(x_0)}\Leftrightarrow
\al_2(x_0)\bi_1(x_0)-\al_1(x_0)\bi_2(x_0)=0$ (8)
\end{itemize}
\end{enumerate}

Now, we can consider two systems of relations (A1) and (A2) as follows:
\[
\left\{\begin{array}{l}
         \al_1(x)y+\al_2(x)=0, \quad (i) \\ [1.5ex]
         \bi_1(x)y+\bi_2(x)=0,\quad (ii) \\ [1.5ex]
         \al_1(x)\neq0, \al_2(x)\neq0, \bi_1(x)\neq0, \bi_2(x)\neq0
       \end{array}\right.  \eqno{\mbox{(A$_1$)}}
\]
and
\[
\left\{\begin{array}{l}
         \al_2(x)\bi_1(x)-\al_1(x)\bi_2(x)=0, \quad (i) \\ [1.5ex]
         y=-\dfrac{\al_2(x)}{\al_1(x)},\quad (ii) \\ [1.5ex]
         \al_1(x)\neq0, \al_2(x)\neq0, \bi_1(x)\neq0, \bi_2(x)\neq0
       \end{array}\right.  \eqno{\mbox{(A$_2$)}}
\]
Let $L_{A_1}$, $L_{A_2}$ be the two set of solutions of systems $A_1$ and $A_2$ respectively. We prove that $L_{A_1}=L_{A_2}$.

By previous procedure and equalities (6) and (8) we get:
\[
L_{A_1}\subseteq L_{A_2} \quad (9)
\]
Now let $(x,y)\in L_{A_2}$. Then equality ii) of $A_2$ gives equality ii) of $A_1$. By equality (i) of $(A_2)$ we get: $\al_2(x)\bi_1(x)=\al_1(x)\bi_2(x)$ and by the fact that $\al_1(x)\neq0$ and $\bi_1(x)\neq0$, we get:
\[
-\frac{\al_2(x)}{\al_1(x)}=-\frac{\bi_2(x)}{\bi_1(x)}. \quad (10)
\]
Through the equality (ii) of $(A_2)$ and (10) we get:
\[
y=-\frac{\bi_2(x)}{\bi_1(x)}. \quad (11)
\]
Equality (11) gives equality (ii) of $(A_1)$. So we have $(x,y)\in L_{A_1}$, that is $L_{A_2}\subseteq L_{A_1}$ \ \ (12).

By (9) and (12) we get: $L_{A_1}=L_{A_2}$.

So, we proved that in order to solve system $(A_1)$ it suffices to solve system $(A_2)$. Thus, we solve equation (i) of $(A_2)$ with the method of the first stage, and for every root $x$ of polynomial $\al_2(x)\bi_1(x)-\al_1(x)\bi_2(x)$ so that $\al_1(x)\neq0$, $\al_2(x)\neq0$,\linebreak $\bi_1(x)\neq0$, $\bi_2(x)\neq0$, we get the respective $y$ from equality (ii) of $(A_2)$.

So far we have completely solved the system (A), in the case of\\ $deg_yp_1(x,y)=deg_yp_2(x,y)=1$.

For the sequel we solve the case ii) where $deg_yp_1(x,y)\le2$ and $deg_yp_2(x,y)\le2$ and $p_1(x,y),p_2(x,y)$ are two pure polynomials. Of course, we have $deg_yp_1(x,y)\ge1$ and $deg_yp_2(x,y)\ge1$, because $p_1(x,y),p_2(x,y)$ are pure polynomials.\medskip

We have already examined the case $deg_yp_1(x,y)=deg_yp_2(x,y)=1$.

So we, here, examine the case where at least one of natural numbers $deg_yp(x,y)$, $deg_yp_2(x,y)$ are equal to 2.

We examine, firstly, the case where:
\[
deg_yp_1(x,y)=2 \ \ \text{and} \ \ deg_yp_1(x,y)=1.
\]
Then, we can write the system (A) as follows:
\[
\left\{\begin{array}{rc}
         \al_2(x)y^2+\al_1(x)y+\al_0(x)=0 & (13) \\ [2ex]
         \bi_1(x)y+\bi_0(x)=0 & (14) \\
       \end{array}\right. \eqno{\mbox{(A)}}
\]
If $\al_2(x)\equiv0$, we have the previous system. So we suppose that $\al_2(x)\not\equiv0$.

Now let some $(x_0,y_0)\in L_A$ as above. We distinguish some cases:
\begin{enumerate}
\item[1)] $\al_2(x_0)=0$. Then, we solve the system $\left\{\begin{array}{l}
\al_1(x)y+\al_0(x)=0 \\
\bi_1(x)y+\bi_0(x)=0 \end{array}\right.$ as previously and we take only the solutions $(x,y)$ of this system so that $\al_2(x)=0$ holds, solving the equation $\al_2(x)=0$ with the method of the first stage.
\item[2)] $\al_2(x_0)\neq0$. We distinguish some cases.
\begin{itemize}
\item[(i)] \ \ $\al_1(x_0)=\bi_1(x_0)=0$. Then we have to solve the system
\[
\left\{\begin{array}{rc}
  \al_2(x)y^2+\al_2(x)=0 & (15) \\ [2ex]
  \bi_0(x)=0 & (16)
\end{array}\right.. \eqno{\mbox{(B$_1$)}}
\]
By (15) we take:
\[
y^2=-\frac{\al_0(x)}{\al_2(x)}. \quad (17)
\]
So, in order to solve this system we do the following:

First of all we find all the common roots $x$ of three polynomials $\al_1(x)$, $\bi_1(x)$, $\bi_0(x)$ that are not roots of polynomial $\al_2(x)$.

If $x\in\R$ and $\al_1(x)=\bi_1(x)=\bi_0(x)=0$ and $\al_2(x)\neq0$, we consider the number $-\dfrac{\al_0(x)}{\al_2(x)}$. If $-\dfrac{\al_0(x)}{\al_2(x)}\ge0$, then we set

$\left(y_1=\sqrt{-\dfrac{\al_0(x)}{\al_2(x)}}\right.$ and $y_2=-\sqrt{-\dfrac{\al_0(x)}{\al_2(x)}}$, if $\left.-\dfrac{\al_0(x)}{\al_2(x)}>0\right)$ and\\ $(y=0$ if $\al_0(x)=0)$, and then under the above conditions $(x,y)\in L_A$.

We find the roots of polynomials $\al_1(x)$, $\bi_1(x)$, $\bi_0(x)$ with the method of the first stage.

Of course, if we cannot find couples $(x,y)\in\R^2$ so that all the above conditions hold, this means, that we do not have solutions to this case.
\item[(ii)] \ \ $\al_1(x_0)=0$, $\bi_1(x_0)\neq0$.

We consider the system:
\[
\left\{\begin{array}{rc}

         \al_2(x)y^2+\al_0(x)=0 & (17) \\ [2ex]
         \bi_1(x)y+\bi_0(x)=0 & (18)
       \end{array}\right.. \eqno{\mbox{(B$_1$)}}
\]
Through (17) and (18) we get:
\[
y^2=-\frac{\al_2(x)}{\al_2(x)} \quad (19)
\]
\[
y=-\frac{\bi_0(x)}{\bi_1(x)} \quad (20) \ \ \Rightarrow \ \ y^2=\bigg(\frac{\bi_0(x)}{\bi_1(x)}\bigg)^2 \quad (21)
\]
Through (19) and (21) we get:
\[
-\frac{\al_0(x)}{\al_2(x)}=\bigg(\frac{\bi_0(x)}{\bi_1(x)}\bigg)^2\Leftrightarrow\al_2(x)
\bi_0(x)^2+\al_0(x)\bi_1(x)^2=0. \quad (22)
\]
 From the above, in order to find a solution of system $(B_2)$ we do the following:

We find all the common roots of two polynomials $\al_2(x)\bi_0(x)^2+\al_0(x)\bi_1(x)^2$ and $\al_1(x)$, that are not roots of polynomials $\al_2(x)$ and $\bi_1(x)$ (if any). Let $x$ be such a root. We set $y=-\dfrac{\bi_0(x)}{\bi_1(x)}$, and then $(x,y)$ is a solution of $(B_2)$ and we get all the other solutions of $(B_2)$ in the same way.
\item[(iii)] \ \ $\al_1(x_0)\neq0$, $\bi_1(x_0)=0$.

Then, through (14) we get $\bi_0(x_0)=0$. So, in order to solve system (A) in this case, we do the following.

We find all the common roots (if any)  $x$ of polynomials $\bi_1(x)$, $\bi_0(x)$, so that $\al_2(x)\neq0$ and $\al_1(x)\neq0$. Of course this is a finite set of numbers $x$.

For such a root $x_0$ we solve the equation $\
\al_2(x_0)y^2+\al_1(x_0)y+\al_0(x)=0$ and we find the respective number $y$ (if any).

All these couples $(x,y)\in\R^2$ (if any) are the set of solutions of system (A) in this case.
\item[(iv)] \ \ $\al_1(x_0)\neq0$, $\bi_1(x_0)\neq0$.

We leave this case for later. In a similar way we examine the case where $deg_yp_1(x,y)=1$ and $deg_yp_2(x,y)=2$.
\end{itemize}
\end{enumerate}

Now, we examine the case where:
\[
deg_yp_1(x,y)=deg_yp_2(x,y)=2.
\]
We have the system:
\[
\left\{\begin{array}{cc}
         \al_2(x)y^2+\al_1(x)y+\al_0(x)=0 & (23) \\ [2ex]
         \bi_2(x)y^2+\bi_1(x)y+\bi_0(x)=0 & (24)
       \end{array}\right.\eqno{\mbox{(B$_3$)}}
\]
Here we examine some cases:
\begin{enumerate}
\item[1)] Let $(x_0,y_0)\in L_{B_3}$.

If $\al_2(x_0)=0$, or $\bi_2(x_0)=0$ we have a system as in the previous case.

So, we suppose that:
\[
\al_2(x_0)\neq0 \ \  \text{and} \ \ \bi_2(x_0)\neq0.
\]
Now, we can distinguish some cases:
\begin{itemize}
\item[i)] \ \ $\al_1(x_0)=\bi_1(x_0)=0$.

So, we are to solve the system:
\[
\begin{array}{rl}
  \al_2(x_0)y^2+\al_0(x_0)=0 & (25) \ \ \text{and} \\[1.5ex]
  \bi_2(x_0)y^2+\bi_0(x_0)=0 & (26).
\end{array}
\]
Through (25) we have $y^2=-\dfrac{\al_0(x_0)}{\al_2(x_0)}$ (27) and by (26) we get
\[
y^2=-\frac{\bi_0(x_0)}{\bi_2(x_0)}. \eqno{\mbox{(28)}}
\]
Through (27) and (28) we get:
\[
-\frac{\al_0(x_0)}{\al_2(x_0)}=-\frac{\bi_0(x_0)}{\bi_2(x_0)}\Leftrightarrow
\al_0(x_0)\bi_2(x_0)-\bi_0(x_0)\al_2(x_0)=0.    \eqno{\mbox{(29)}}
\]
From the above, we have the following solution:

We find the common roots of polynomials\\ $\al_1(x),\bi_1(x),\al_0(x)\bi_2(x)-\bi_0(x)\al_2(x)$ so that $\al_2(x)\neq0$ and $\bi_2(x)\neq0$.

We suppose that there exists such a root $x_0$. If $\al_0(x_0)=0$, we get $y_0=0$.\\
If $\dfrac{\al_0(x_0)}{\al_2(x_0)}<0$, we consider
\[
y_1=\sqrt{-\frac{\al_0(x_0)}{\al_2(x_0)}}, \ \ y_2=-\sqrt{-\frac{\al_0(x_0)}{\al_2(x_0)}},
\]
and $(x_0,y_1),(x_0,y_2)\in L_{B_3}$. We get all the other solutions of $B_3$ in the same way. Of course, if $(x,y)$ does not exist with the above conditions, we do not have solutions of $B_3$ in this case.
\item[ii)] \ \ $\al_1(x_0)=0$ and $\bi_1(x_0)\neq0$.

We will postpone this case for later.
\item[iii)] \ \ $\al_1(x_0)\neq0$ and $\bi_1(x_0)=0$.

We will also postpone this case for later.

iv) \ \ $\al_1(x_0)\neq0$ and $\bi_1(x_0)\neq0$.

This is the central case of system $(B_3)$.
\end{itemize}
\end{enumerate}

We consider the number:
\[
D=\al_2(x_0)\bi_1(x_0)-\al_1(x_0)\bi_2(x_0)=\left|\begin{array}{ccc}
                                                   \al_2(x_0) & & \al_1(x_0) \\ [2ex]
                                                    \bi_2(x_0) & & \bi_1(x_0)
                                                  \end{array}\right|
\]
that we call it: the determinant of system $(B_3)$.

We distinguish two cases:\medskip\\
a) $ D\neq0$.\\
We consider the linear system:
\[
\left\{\begin{array}{cl}
         \al_2(x_0)z+\al_1(x_0)\oo=-\al_0(x_0) & (30) \\ [2ex]
          \bi_2(x_0)z+\bi_1(x_0)\oo=-\bi_0(x_0)& (31)
       \end{array}\right.\eqno{\mbox{(B$_4$)}}
\]
This linear system has determinant $D\neq0$, so, it has exactly one solution.

We set:
\[
D_1=\left|\begin{array}{ccc}
            -\al_0(x_0) &&\al_1(x_0)  \\ [2ex]
            -\bi_0(x_0) && \bi_1(x_0)
                      \end{array} \right|
=\al_1(x_0)\bi_0(x_0)-\al_0(x_0)\bi_1(x_0) \eqno{\mbox{(32)}}
\]
and
\[
D_2=\left|\begin{array}{ccc}
            \al_2(x_0) &&-\al_0(x_0)  \\ [2ex]
            \bi_2(x_0) && -\bi_0(x_0)
                      \end{array} \right|
=\al_0(x_0)\bi_2(x_0)-\al_2(x_0)\bi_2(x_0) \eqno{\mbox{(33)}}
\]
Through Cramer's law of linear algebra we get the unique solution $(z,\oo)$ of system $B_4$, that is
\[
z=\frac{D_1}{D} \ \ \text{and} \ \ \oo=\frac{D_2}{D}.
\]
From our supposition the couple $(x_0,y_0)\in L_{B_3}$. This means that the numbers $y^2_0$ and $y_0$ satisfy equations (23) and (24) of $(B_3)$, or differently, in other words the couple $(y^2_0,y_0)$ is a solution of the linear system $(B_4)$. But because of our supposition $D\neq0$, the couple $(z,\oo)$, where $z=\dfrac{D_1}{D}$ \ \ (34) and $\oo=\dfrac{D_2}{D}$ \ \ (35), is the unique solution of system $(B_4)$, as it is well known in  linear algebra. So, we have $z=y^2_0$ and $\oo=y_0$, and by (34) and (35) we get:
\[
y^2_0=\frac{D_1}{D} \quad (36) \ \ \text{and} \ \ y_0=\frac{D_2}{D} \quad (37)
\]
Now, we exploit the inner relation that numbers $y_0$ and $y^2_0$ have, that is:
\[
y^2_0=y_0\cdot y_0.  \eqno{\mbox{(38)}}
\]
Replacing (36) and (37) in relation (38), we get:
\[
\frac{D_1}{D}=y_0\cdot\frac{D_2}{D}\Rightarrow D_1-y_0D_2=0. \eqno{\mbox{(39)}}
\]
By (37) we have
\[
Dy_0-D_2=0\eqno{\mbox{(40)}}
\]
So, the couple $(x_0,y_0)\in L_{B_3}$ also satisfies the system:
\[
\left\{\begin{array}{rcl}
         D_1-yD_2=0 && (39) \\  [2ex]
         Dy-D_2=0 && (40)
       \end{array}\right.
\]
From the above, we have the two systems:
\[
\left\{\begin{array}{l}
         \al_2(x)y^2+\al_1(x)y+\al_0(x)=0 \\ [1.5ex]
         \bi_2(x)y^2+\bi_1(x)y+\bi_0(x)=0  \\ [1.5ex]
         \al_2(x)\neq0,\bi_2(x)\neq0,\;\al_1(x)\neq,\;\bi_1(x)\neq0,  \\ [1.5ex]
         D=\left|\begin{array}{cc}
                   \al_2(x) & \al_1(x) \\ [2ex]
                   \bi_2((x) & \bi_1(x)
                 \end{array}\right|\neq0
         \end{array}\right. \eqno{\mbox{(B$_5$)}}
\]
and
\[
\left\{\begin{array}{l}
         D_1=yD_2=0 \\ [1.5ex]
         Dy-D_2=0 \\ [1.5ex]
         \al_2(x)\neq0,\;\bi_2(x)\neq0,\;\al_1(x)\neq0,\;\bi_1(x)\neq0, \\ [1.5ex]
         D\neq0
       \end{array}\right..  \eqno{\mbox{(B$_6$)}}
\]
Let $L_{B_5}$, $L_{B_6}$ be the set of solutions of systems $(B_5)$ and $(B_6)$. We now show that $L_{B_5}=L_{B_6}$. Of course we have $L_{B_5}\subseteq L_{B_6}$ from the previous procedure, because we obtained equalities (39) and (40) of system $(B_6)$ from system $B_5$.

Reversely, let $(x_0,y_0)\in L_{B_6}$. From the first two equalities of $(B_6)$ we get:
\[
y_0=\frac{D_1}{D_2} \ \ \text{and} \ \ y_0=\frac{D_2}{D} \eqno{\mbox{(37)}}
\]
We multiply these equalities and we take: $y^2_0=\dfrac{D_1}{D}$ (36).

Now, we consider the linear system $(B_4)$. Because $D\neq0$ (by our supposition), this system has a unique solution  $(z,\oo)=\Big(\dfrac{D_1}{D},\dfrac{D_2}{D}\Big)$, (41) as it is well known, in Cramer's law.

From (36), (37) and (41) we have: $z=y^2_0$ (42) and $\oo=y_0$ (43).

Replacing (42) and (43) in (30) and (31) of $(B_4)$ we get the first two equalities of $(B_5)$, that is $(x_0,y_0)\in L_{B_5}$. So, we have: $L_{B_6}\subseteq L_{B_5}$.\\
From the above we have $L_{B_5}=L_{B_6}$. So, we are led to solve system $B_6$, which we have examined previously, in the system: $\left.\begin{array}{l}
                                                   p_1(x,y)=0 \\[1.5ex]
                                                   p_2(x,y)=0
                                                 \end{array}\right\}$, where
$\begin{array}{l}
   deg_yp_1(x,y)\le1 \ \ \text{and} \\ [1.5ex]
   deg_yp_2(x,y)\le1.
 \end{array}$ \medskip\\
b) $D=0$\\
The solution of  these cases is taken as follows:

We take the roots of polynomial $D$ $x_1$, that is $\al_2(x_1)\bi_1(x_1)-\al_1(x_1)\bi_2(x_1)=0$ so that $\al_2(x_1)\neq0$, $\bi_2(x_1)\neq0$, $\al_1(x_1)\neq0$ and $\bi_1(x_1)\neq0$. We get $y$ that satisfies one of the equations (23), or (24) of $(B_3)$, that is:
\[
\al_2(x_1)y^2+\al_1(x_1)y+\al_0(x_1)=0.
\]
This holds because the two equations (23) and (24) are equivalent, (as we have shown in prerequisites of linear algebra), and each of them is a multiple of the other.\vspace*{0.2cm}\\
\noindent
{\bf Remark 2.2.1.}
{\em We note that the three remaining cases we have left are similar to  case a) above where $D\neq0$}.\smallskip

So far, we have examined system (A), where $deg_yp_1(x,y)\le2$ and\linebreak $deg_yp_2(x,y)\le2$. We set
\[
m_0:=\max\{deg_yp_1(x,y),deg_yp_2(x,y)\}.
\]
We solve system (A) in the general case with induction above the number $m_0$. We have examined the cases where $m_0=1$ or $m_0=2$.

We suppose that for $k_0\in\N$, $k_0\ge3$, we have solved system (A) for every system so that $m_0\le k_1-1$. We now solve system (A) when $m_0=k_0$.

We can write polynomials $p_1(x,y),p_2(x,y)$ as follows:
\begin{align*}
\al_{m_0}(x)y^{m_0}+\al_{v_0}(x)y^{v_0}+q_1(x,y)&=p_1(x,y) \ \ \text{and} \ \ \bi_{n_0}(x)y^{n_0}+\bi_{\mi_0}(x)y^{\mi_0}+q_2(x,y)\\
&=p_2(x,y),
\end{align*}
where $v_0<m_0$, $v_0,m_0\in\N$, $deg_yq_1(x,y)<v_0$ and $\mi_0<n_0$, $\mi_0,n_0\in\N$, $n_0\le m_0$, $deg_yq_2(x,y)<\mi_0$.

So, the initial system can be written as follows:
\[
\left\{\begin{array}{rl}
       \al_{m_0}(x)y^{m_0}+\al_{v_0}(x)y^{v_0}+q_1(x,y)=0 & (1)   \\ [2ex]
       \bi_{n_0}(x)y^{n_0}+\bi_{\mi_0}(x)y^{\mi_0}+q_2(x,y)=0 & (2)
       \end{array}\right.    \eqno{\mbox{(A)}}
\]
where $\al_{m_0}(x),q_{v_0}(x),\bi_{n_0}(x),\bi_{\mi_0}(x)$ are polynomials of the real variable $x$ only and $q_1(x,y),q_2(x,y)$ be polynomials of real variables $x$ and $y$.

We, also, suppose that $a_{m_0}(x)\not\equiv0$. We can distinguish some cases as previously:\medskip

1) Let $\al_{v_0}(x)\equiv q_1(x,y)\equiv\bi_{n_0}(x)\equiv\bi_{\mi_0}(x)\equiv q_2(x,y)\equiv0$. Then we have the system: $\al_{m_0}(x)y^{m_0}=0$. If $m_0=0$, and $\al_{m_0}(x)=c\in\R$ then every couple $(x,0)\in L_A$ and the system has an infinite set of solutions, which is false. So, this case cannot occur.\medskip

2) $\bi_{n_0}(x)\equiv\bi_{\mi_0}(x)\equiv q_2(x,y)\equiv0$.

We will examine this case later.\medskip

3) $\bi_{n_0}(x)\not\equiv0$, $\al_{v_0}(x)\equiv q_1(x,y)\equiv\bi_{\mi_0}(x)\equiv q_2(x,y)\equiv0$.

We have the system
\[
\left\{\begin{array}{c}
         \al_{m_0}(x)y^{m_0}=0 \\ [2ex]
         \bi_{n_0}(x)y^{n_0}=0
       \end{array}\right.   \eqno{\mbox{(A)}}
\]
If $deg\al_{m_0}(x)=deg\bi_{n_0}(x)=0$, then any couple $(x,0)\in L_A$ and the set of solutions is infinite, which is false. So, this case cannot occur. \medskip

4) $\bi_{n_0}(x)\not\equiv0$, $\al_{v_0}(x)\equiv\bi_{\mi_0}(x)\equiv0\equiv q_1(x,y)$ and $q_2(x,y)\equiv r(x)\not\equiv0$.

So, we have the system:
\[
\left\{\begin{array}{rl}
  \al_{m_0}(x)y^{m_0}=0 & (3) \\ [2ex]
  \bi_{n_0}(x)y^{n_0}+r(x)=0 & (4)
\end{array}\right. \eqno{\mbox{(A)}}
\]
We can distinguish some cases here.

Firstly, we suppose that (A) has a solution $(x_0,y_0)\in L_A$.
\begin{enumerate}
\item[i)] $\al_{m_0}(x_0)=0=\bi_{n_0}(x_0)$. Then, of course $r(x_0)=0$. So, if the polynomials $\al_{m_0}(x), \bi_{n_0}(x),r(x)$, has a common root $x_0$, then any couple $(x_0,y)\in L_A$, for every $y\in\R$, which is false of course.

So, this case cannot happen.
\item[ii)] $\al_{m_0}(x_0)=0$, $\bi_{n_0}(x_0)\neq0$.

By (4), we take $y_0=-\dfrac{r(x_0)}{\bi_{n_0}(x_0)}$.

Thus, in this case we solve the system as follows:

We find the roots $x$ of $\al_{m_0}(x)$, so that $\bi_{n_0}(x)\neq0$. For every such root the couple $(x,y)=\Big(x,-\dfrac{r(x)}{\bi_{n_0}(x)}\Big)\in L_A$.

We get all the other solutions of this system in the same way.
\item[iii)] $\al_{m_0}(x_0)\neq0$ and $\bi_{n_0}(x_0)=0$.

Through (3), we get $y=0$. By (4), we take $r(x_0)=0$.

So, in this case we can solve the system as follows: We find the roots of $r(x)$ such that $\al_{m_0}(x)\neq0$ and $\bi_{n_0}(x)=0$. For every such root $x$, the couple $(x,0)\in L_A$.
\item[iv)] $\al_{m_0}(x_0)\neq0$ and $\bi_{n_0}(x_0)\neq0$.

By (3) we get $y_0=0$, and by (4), for $y_0=0$ we get $r(x_0)=0$.

So, in this case we can solve the system as follows: We find the roots $x$ of $r(x)$ so that $\al_{m_0}(x)\neq0$ and $\bi_{n_0}\neq0$. Then the couple $(x,0)$ is a solution of (A).
\item[v)] In a similar way we can solve a system of the form:
\[
\left\{\begin{array}{r}
         \al_{m_0}(x)y^{m_0}+r(x)=0 \\ [2ex]
         \bi_{n_0}(x)y^{n_0}=0.
       \end{array}\right.
\]
\end{enumerate}

5) $\bi_{n_0}(x)\not\equiv0$, $\al_{v_0}(x)\equiv\bi_{\mi_0}(x)\equiv0$, $q_1(x,y)\equiv r_v(x)\not\equiv0$, $q_2(x,y)\equiv r_2(x)\not\equiv0$.

So, we have the system:
\[
\left\{\begin{array}{c}
         \al_{m_0}(x)y^{m_0}+r_1(x)=0 \\ [2ex]
         \bi_{n_0}(x)y^{n_0}+r_2(x)=0
       \end{array}\right.    \eqno{\mbox{(A)}}
\]
Let $(x_0,y_0)\in L_A$.

If $r_1(x_0)=0$ or $r_2(x_0)=0$, then we have the system of the previous case 4. So, we suppose that: $r_1(x_0)\neq0$ and $r_2(x_0)\neq0$. We can distinguish some cases:
\begin{enumerate}
\item[(i)] $\al_{m_0}(x_0)=\bi_{n_0}(x_0)=0$

So, we have the system:
\[
\left\{\begin{array}{c}
         r_1(x)=0 \\ [2ex]
         r_2(x)=0
       \end{array}\right..  \eqno{\mbox{(A)}}
\]
Let $x_0$ be a common root of $r_1(x),r_2(x)$. Then, we have $(x_0,y)\in L_A$, for every $y\in\R$, which is false of course, because the set $L_A$ is finite.

So, this case cannot occur.
\item[(ii)] $\al_{m_0}(x_0)=0$ and $\bi_{n_0}(x_0)\neq0$. Then, we get $r_1(x_0)=0$ and $y^{n_0}_0=-\dfrac{r_2(x_0)}{\bi_{n_0}(x_0)}$, and if $n_0$ is odd we have $y_0=\sqrt[n_0]{-\dfrac{r_2(x_0)}{\bi_{n_0}(x_0)}}$ if $\dfrac{r_2(x_0)}{\bi_{n_0}(x_0)}\le0$ and $y_0=-\sqrt[n_0]{\dfrac{r_2(x_0)}{\bi_{n_0}(x_0)}}$\\ if $\dfrac{r_2(x_0)}{\bi_{n_0}(x_0)}>0$.

    If $n_0$ is even and $\dfrac{r_2(x_0)}{\bi_{n_0}(x_0)}\le0$, we have:
\[
y_1=\sqrt[n_0]{-\dfrac{r_2(x_0)}{\bi_{n_0}(x_0)}} \ \ \text{and} \ \ y_2=-\sqrt[n_0]{-\dfrac{r_2(x_0)}{\bi_{n_0}(x_0)}}.
\]
So, in this case we solve the system as follows:

We find the common roots of polynomials $\al_{m_0}(x)$ and $r_1(x_0)$, such that $\bi_{n_0}(x)\neq0$. Let $x_0$ such a root.

Then, if $n_0$ is odd and $\dfrac{r_2(x_0)}{\bi_{n_0}(x_0)}\le0$, then the couple $(x_0,y_0)\in L_A$, where
\[
y_0=\sqrt[n_0]{-\dfrac{r_2(x_0)}{\bi_{n_0}(x_0)}},
\]
whereas if $\dfrac{r_2(x_0)}{\bi_{n_0}(x_0)}>0$, then the couple $(x_0,y_0)\in L_A$, where
\[
y_0=-\sqrt[n_0]{\dfrac{r_2(x_0)}{\bi_{n_0}(x_0)}}.
\]
If $n_0$ is even, we set
\[
y_1=\sqrt[n_0]{-\dfrac{r_2(x_0)}{\bi_{n_0}(x_0)}} \ \ \text{and} \ \ y_2=-\sqrt[n_0]{-\dfrac{r_2(x_0)}{\bi_{n_0}(x_0)}},
\]
and the couples $(x_0,y_1),(x_0,y_2)\in L_A$, where $\dfrac{r_2(x_0)}{\bi_{n_0}(x_0)}\le0$. This case can happen if the above conditions hold, of course.
\item[(iii)] $\al_{m_0}(x_0)\neq0$, $\bi_{n_0}(x_0)=0$.

This case is similar to the previous case (ii).
\item[(iv)] $\al_{m_0}(x_0)\neq0$ and $\bi_{n_0}(x_0)\neq0$.

Through the equations of $(A)$ we get:
\[
y^{m_0}_0=-\frac{r_1(x_0)}{\al_{m_0}(x_0)} \ \ \text{and} \ \ y^{n_0}_0=-\frac{r_2(x_0)}{\bi_{n_0}(x_0)}.
\]
Through these equations we get:
\[
y^{n_0m_0}_0=(-1)^{n_0}\bigg(\frac{r_1(x_0)}{\al_{m_0}(x_0)}\bigg)^{n_0} \ \ \text{and} \ \ y^{n_0m_0}_0=(-1)^{m_0}\bigg(\frac{r_2(x_0)}{\bi_{n_0}(x_0)}\bigg)^{m_0}
\]
and by these equations we get:
\begin{align*}
(-1)^{n_0}\bigg(\frac{r_1(x_0)}{\al_{m_0}(x_0)}\bigg)^{n_0}&=(-1)^{m_0}
\bigg(\frac{r_2(x_0)}{\bi_{n_0}(x_0)}\bigg)^{m_0}\\
&\Leftrightarrow(-1)^{m_0-n_0}r_2(x_0)^{m_0}\al_{m_0}
(x_0)^{n_0}-r_1(x_0)^{n_0}\bi_{n_0}(x_0)^{m_0}=0.
\end{align*}
So, we solve this case as follows:

We find the real roots of polynomial $(-1)^{m_0-n_0}r_2(x)^{m_0}\al_{m_0}(x)^{n_0}-r_1(x)^{n_0}\bi_{n_0}(x)^{m_0}$, so that $\al_{m_0}(x)\neq0$ and $\bi_{n_0}(x)\neq0$.

For every $x_0$, we get $y_0$ so that $y^{m_0}_0=-\dfrac{r_1(x_0)}{\al_{m_0}(x_0)}$ as in the previous case (ii).
\end{enumerate}
%\medskip
\medskip

6) $\bi_{n_0}(x)\not\equiv0$, $\al_{v_0}(x)\equiv\bi_{\mi_0}(x)\equiv0$, $q_1(x,y),q_2(x,y)$ are two pure polynomials. So, we have to solve the system:
\[
\left\{\begin{array}{r}
         \al_{m_0}(x)y^{m_0}+q_1(x,y)=0 \\ [2ex]
         \bi_{n_0}(x)y^{n_0}+q_2(x,y)=0
       \end{array}\right.   \eqno{\mbox{(A)}}
\]
where $deg_yq_1(x,y)\ge1$, $deg_yq_2(x,y)\ge1$, and $q_1(x,y),q_2(x,y)$ are two monomials. So, in this case we have the system:
\[
\left\{\begin{array}{r}
         \al_{m_0}(x)y^{m_0}+\al_0(x)y^{\la_1}=0 \\ [2ex]
         \bi_{n_0}(x)y^{n_0}+\bi_0(x)y^{\la_2}=0
       \end{array}\right.,
\]
where $\al_0(x),\bi_0(x)$ are two polynomials so that one of them (at least) is non-zero and $\la_1,\la_2\in\N$, so that $\la_1<m_0$ and $\la_2<n_0$. We can write the system as follows:
\[
\left\{\begin{array}{r}
         y^{\la_1}(\al_{m_0}(x)y^{m_0-\la_1}+\al_0(x))=0 \\ [2ex]
         y^{\la_2}(\bi_{n_0}(x)^{n_0-\la_2}+\bi_0(x))=0
       \end{array}\right.,  \eqno{\mbox{(A)}}
\]
so for every $x\in\R$, the couple $(x,0)\in L_A$, which is false, because $L_A$ is finite. Thus, this cannot occur. \medskip

7) We suppose that $\bi_{n_0}(x)\not\equiv0$ $q_1(x,y)\equiv r_1(x)$, $\al_{v_0}(x)\not\equiv0$ or $\bi_{\mi_0}(x)\not\equiv0$, $q_2(x,y)\equiv r_2(x)$, where $r_1(x)\not\equiv0$ or $r_2(x)\not\equiv0$. So, we get the system:
\[
\left\{\begin{array}{r}
  \al_{m_0}(x)y^{m_0}+\al_{v_0}(x)y^{v_0}+r_1(x)=0 \\ [2ex]
  \bi_{n_0}(x)y^{n_0}+\bi_{\mi_0}(x)y^{\mi_0}+r_2(x)=0
\end{array}\right. \eqno{\mbox{(A)}}
\]
We distinguish some cases:
\begin{enumerate}
\item[(i)] $r_1(x)\equiv0$ and $r_2(x)\not\equiv0$. So, we get the system:
\[
\left\{\begin{array}{lr}
         \al_{m_0}(x)y^{m_0}+\al_{v_0}(x)y^{v_0}& =0 \\ [2ex]
                \bi_{n_0}(x)y^{n_0}+\bi_{\mi_0}(x)y^{\mi_0}+r_2(x)& =0
       \end{array}\right. \eqno{\mbox{(A)}}
\]
Let $(x_0,y_0)\in L_A$.

Through the first equation we get
\[
y^{v_0}(\al_{m_0}(x)y^{m_0-v_0}+\al_{v_0}(x))=0
\]
If $y=0$, by the second equation we get $r_2(x_0)=0$. So, if $x_0$ is a root of $r_2(x)$, then the couple $(x_0,0)\in L_A$.

Let $y_0\neq0$. Then we get:
\[
\al_{m_0}(x_0)y^{m_0-v_0}_0+\al_{v_0}(x_0)=0 \ \ \text{and} \ \ \bi_{n_0}(x_0)y^{n_0}_0+\bi_{\mi_0}(x_0)y^{\mi_0}_0+r_2(x_0)=0.
\]
If $r_2(x_0)=0$ we have some of the previous cases that we have already examined. So, we suppose that $r_2(x_0)\neq0$. If $n_0<m_0$ we have a system that we supposedly can solve through the induction step. Thus, we suppose that $n_0=m_0$. So we have the system:
\[
\left\{\begin{array}{lr}
         \al_{m_0}(x_0)y^{m_0-v_0}_0+\al_{v_0}(x_0) & =0 \\ [2ex]
                  \bi_{n_0}(x_0)y^{m_0}_0+\bi_{\mi_0}(x_0)y^{\mi_0}+r_2(x_0) & =0
       \end{array}\right.
\]
We will postpone this case, because we will examine a more general case later, that covers this case.
\item[(ii)] $r_1(x)\not\equiv0$ and $r_2(x)\equiv0$.

This case is similar to the previous.
\item[(iii)] $r_1(x)\not\equiv0$ and $r_2(x)\not\equiv0$.
\end{enumerate}

Let $(x_0,y_0)\in L_A$. If $r_1(x_0)=0$, or $r_2(x_0)=0$ we have some of the previous cases. So, we suppose that $r_1(x_0)\neq0$ and $r_2(x_0)\neq0$. We have some cases:
\begin{enumerate}
\item[(i)] $\al_{v_0}(x)\not\equiv0$ and $\bi_{\mi_0}(x)\equiv0$.

So, we have the system
\[
\left\{\begin{array}{lr}
        \al_{m_0}(x)y^{m_0}+\al_{v_0}(x)y^{v_0}  & +r_1(x)=0 \\ [2ex]
        \bi_{n_0}(x)y^{m_0}  & +r_2(x)=0
       \end{array}\right..  \eqno{\mbox{(A)}}
\]
We consider some cases:
\begin{itemize}
\item[(a)]\ \ Let $(x_0,y_0)\in L_A$, $\al_{m_0}(x_0)=\bi_{n_0}(x_0)=0$. We have examined this case previously.
\item[(b)] \ \ $\al_{m_0}(x_0)=0$ and $\bi_{n_0}(x_0)\neq0$. We have examined this case previously.
\item[(c)] \ \ $\al_{m_0}(x_0)\neq0$ and $\bi_{n_0}(x_0)=0$.

If $\al_{v_0}(x_0)=0$ we have examined this case previously. So, we suppose that $\al_{v_0}(x_0)\neq0$. Then we get $r_2(x_0)=0$ by the second equation of (A) because $\bi_{n_0}(x_0)=0$, that is false by our supposition. So, this case cannot occur.
\item[(d)] \ \ $\al_{m_0}(x_0)\neq0$ and $\bi_{n_0}(x_0)\neq0$.
\end{itemize}

If $\al_{v_0}(x_0)=0$, we have examined this case previously. So, we suppose that $\al_{v_0}(x_0)\neq0$. We will examine this case later.
\item[(ii)] $\al_{v_0}(x)\equiv0$ and $\bi_{\mi_0}(x)\not\equiv0$.

This case is similar to the previous:
\item[(iii)] $\al_{v_0}(x)\not\equiv0$ and $\bi_{\mi_0}(x)\not\equiv0$.

We have some cases here:
\begin{itemize}
\item[(a)] \ \ We suppose that $\bi_{n_0}(x_0)=\bi_{\mi_0}(x_0)=0$, and $\al_{m_0}(x_0)\neq0$, $\al_{v_0}(x_0)\neq0$. So, we have the system:
\[
\left\{\begin{array}{r}
         \al_{m_0}(x_0)y^{m_0}+\al_{v_0}(x_0)y^{v_0}+r_1(x_0)=0 \\  [2ex]
         r_2(x_0)=0
       \end{array}\right.. \eqno{\mbox{(A)}}
\]
This cannot happen because $r_2(x_0)\neq0$ by our supposition.
\item[(b)] \ \ We suppose that $\al_{m_0}(x_0)=\al_{v_0}(x_0)=0$. Then we get that $r_1(x_0)=0$, which is false by our supposition. Thus, this case cannot occur.
\item[(c)] \ \ If $\al_{m_0}(x_0)=0$, or $\al_{v_0}(x_0)=0$, or $\bi_{n_0}(x_0)=0$, or $\bi_{\mi_0}(x_0)=0$, then we get some of the previous cases.
\item[(d)] $\al_{m_0}(x_0)\neq0$ and $\al_{v_0}(x_0)\neq0$ and $\bi_{n_0}(x_0)\neq0$ and $\bi_{\mi_0}(x_0)\neq0$.

We have to solve the system:
\[
\left\{\begin{array}{r}
  \al_{m_0}(x_0)y^{m_0}+\al_{v_0}(x_0)y^{v_0}+r_1(x_0)=0 \\ [2ex]
  \bi_{n_0}(x_0)y^{m_0}+\bi_{\mi_0}(x_0)y^{\mi_0}+r_2(x_0)=0
\end{array}\right.. \eqno{\mbox{(A)}}
\]
We have here the basic case of this system.

We will examine this case later in a more general case.
\end{itemize}
\end{enumerate}

Now, we will examine the system:
\[
\left\{\begin{array}{r}
         \al_{m_0}(x)y^{m_0}+\al_{v_0}(x)y^{v_0}+q_1(x,y)=0 \\ [2ex]
         \bi_{n_0}(x)y^{n_0}+\bi_{\mi_0}(x)y^{\mi_0}+q_2(x,y)=0 \end{array}\right.,
\]
where $\al_{m_0}(x)\not\equiv0$, $m_0\ge3$, $v_0<m_0$, $n_0\le m_0$ $n_0>\mi_0$, $q_1(x,y),q_2(x,y)$ are two pure polynomials, $\al_{v_0}(x)\not\equiv0$, $\bi_{\mi_0}(x)\not\equiv0$.

We candistinguish the following cases: \medskip

1) $\bi_{n_0}(x)\equiv0$.

We get $(x_0,y_0)\in L_A$.

If $\al_{m_0}(x_0)=0$, then we have a system from the induction step. So, we suppose $\al_{m_0}(x_0)\neq0$.

We have some cases:
\begin{enumerate}
\item[(i)] $\al_{v_0}(x_0)=\bi_{\mi_0}(x_0)=0$.

Then, we analyze polynomials $q_1(x,y),q_2(x,y)$ and we reach a system of the following form
\[
\left\{\begin{array}{r}
  \al_{m_0}(x_0)y^{m_0}+\al_{v_1}(x_0)y^{v_1}+q_3(x_0,y_0)=0 \\ [2ex]
\bi_{m_1}(x_0)y^{\mi_1}+q_3(x_0,y_0)=0
\end{array}\right., \eqno{\mbox{(A)}}
\]
where $\al_{v_1}(x_0)\neq0$, $\bi_{\mi_1}(x_0)\neq0$, $v_1<m_0$, $\mi_1\in\N$, $deg_yq_3(x,y)<v_1$,\\ $deg_yq_3(x,y)<\mi_1$.

We will see later how we can solve such a system.
\item[(ii)] $\al_{v_0}(x_0)\neq0$ and $\bi_{\mi_0}(x_0)=0$.

Then, we analyze polynomial $q_2(x,y)$ and we reach a system of the following form:
\[
\left\{\begin{array}{rr}
         \al_{v_0}(x_0)y^{m_0}_0+\al_{v_0}(x_0)y^{v_0}_0+q_1(x_0,y_0)=0 & (1)\\ [2ex]
         \bi_{\mi_1}(x_0)y^{\mi_1}_0+r_2(x_0)=0 & (2)
       \end{array} \right., \eqno{\mbox{(B)}}
\]
If $\bi_{\mi_1}(x_0)=0$, we get from (2) that: $r_2(x_0)=0$.

In this case we solve system (B) as follows:

We take $x_0$ that is a common root of polynomials in the second equation (B) so that $\al_{m_0}(x_0)\neq0$ and $\al_{v_0}(x_0)\neq0$, and we find $y_0$ from the first equation of (B).
\item[(iii)] $\al_{v_0}(x_0)\neq0$ and $\bi_{\mi_0}(x_0)\neq0$. We will see later how we solve this system.\medskip

{\bf After all the above cases we reach now in the most important case}.\medskip

We have the system:
\[
\left\{\begin{array}{r}
         \al_{m_0}(x)y^{m_0}+\al_{v_0}(x)y^{v_0}+q_1(x,y)=0 \\ [2ex]
         \bi_{n_0}(x)y^{n_0}+\bi_{\mi_0}(x)y^{\mi_0}+q_2(x,y)=0 \end{array}\right.     \eqno{\mbox{(A)}}
\]
where we have:
\[
\begin{array}{l}
  m_0>v_0>deg_yq_1(x,y), \\ [2ex]
  n_0>\mi_0>deg_yq_2(x,y),
\end{array}
\]
$\al_{m_0}(x)\not\equiv0$, $\al_{v_0}(x)\not\equiv0$, $\bi_{n_0}(x)\not\equiv0$, $\bi_{\mi_0}(x)\not\equiv0$, $q_1(x,y)$, $q_2(x,y)$ be two pure polynomials.

We distinguish some cases:
\item[(i)] $m_0=n_0$.

We also have some cases here.
\begin{itemize}
\item[(a)] \ \ $v_0=\mi_0$.

So, we have the system:
\[
\left\{\begin{array}{r}
         \al_{m_0}(x)y^{m_0}+\al_{v_0}(x)y^{v_0}+q_1(x,y)=0 \\ [2ex]
         \bi_{m_0}(x)y^{m_0}+\bi_{v_0}(x)y^{v_0}+q_2(x,y)=0 \end{array}\right. \eqno{\mbox{(A)}}
\]
Firstly, we examine the case where:
\[
(x_0,y_0)\in L_A \ \ \text{and} \ \ \al_{m_0}(x_0)\cdot\al_{v_0}(x_0)\cdot\bi_{m_0}(x_0)\cdot\bi_{m_0}(x_0)
\cdot\bi_{v_0}(x_0)\neq0.
\]
Let
\[
D=\left|\begin{array}{cc}
          \al_{m_0}(x_0) & \al_{v_0}(x_0) \\ [2ex]
          \bi_{m_0}(x_0) & \bi_{v_0}(x_0)
        \end{array}\right|=\al_{m_0}(x_0)\bi_{v_0}(x_0)-\al_{v_0}(x_0)
        \bi_{m_0}(x_0).
\]
We suppose that $D\neq0$.

This exactly {\bf is the first basic case}. We will study three basic cases overall. We consider the linear system:
\[
\left\{\begin{array}{cc}
         \al_{m_0}(x_0)z+\al_{v_0}(x_0)\oo=-q_1(x_0,y_0) & (1) \\ [2ex]
         \bi_{m_0}(x_0)z+\bi_{v_0}(x_0)\oo=-q_2(x_0,y_0) & (2)
       \end{array}\right. \eqno{\mbox{(B)}}
\]
We set:
\[
D_1=\left|\begin{array}{lc}
            -\al_1(x_0,y_0) & \al_{v_0}(x_0) \\  [2ex]
            -q_2(x_0,y_0) & \bi_{v_0}(x_0)
          \end{array}\right| \ \ \text{and} \ \
D_2=\left|\begin{array}{lc}
            \al_{m_0}(x_0) & -q_1(x_0,y_0) \\  [2ex]
            \bi_{m_0}(x_0) & -q_2(x_0,y_0)
          \end{array}\right|.
\]
That is, we have:
\[
\begin{array}{l}
  D_1=\al_{v_0}(x_0)q_2(x_0,y_0)-\bi_{v_0}(x_0)q_1(x_0,y_0) \ \ \text{and} \\ [2ex]
  D_2=\bi_{m_0}(x_0)q_1(x_0,y_0)-\al_{m_0}(x_0)q_2(x_0,y_0). \end{array}
\]
\end{itemize}
\end{enumerate}

Because $D\neq0$, by our supposition, we take it that system (B) has only one solution $(z_0,\oo_0)$, where $z_0=\dfrac{D_1}{D}$ (3) and $\oo_0=\dfrac{D_2}{D}$ (4), as it is well known by linear algebra by Cramer's law.

Because $(x_0,y_0)\in L_A$, (by our supposition) this means that the couple $(y^{m_0}_0,y^{v_0}_0)$ is a solution of system (B).

But, $(z_0,\oo_0)$ is the unique solution of system (B). So, we have: $(z_0,\oo_0)=(y^{m_0}_0,y^{v_0}_0)\Leftrightarrow z_0=y^{m_0}_0$ (5) and $\oo_0=y^{v_0}_0$ (6). By (3), (4), (5) and (6), we get:
\[
y^{m_0}_0=\frac{D_1}{D} \ \ \text{(7)} \ \ \text{and} \ \ y^{v_0}_0
=\frac{D_2}{D}. \ \ \text{(8)}
\]
Now, we use the obvious relation of numbers $y^{m_0}_0$ and $y^{v_0}_0$, that is: $y^{m_0}_0=y^{m_0-v_0}_0\cdot y^{v_0}_0$ (9), where $v_0<m_0$, by our supposition.

Replacing by (7) and (8) in (9), we get:
\[
\frac{D_1}{D}=y^{m_0-v_0}_0\cdot\frac{D_2}{D}\Leftrightarrow D_2y^{m_0-v_0}_0-D_1=0. \eqno{\mbox{(10)}}
\]
From the above we see that $(x_0,y_0)$ satisfies the two equations:
\[
\left\{\begin{array}{rc}
         Dy^{v_0}_0-D_2=0 & (11) \\ [2ex]
         D_2y^{m_0-v_0}_0-D_1=0 & (12)
       \end{array}\right.  \eqno{\mbox{(C)}}
\]
We notice that polynomials in (11) and (12) have degree with respect to $y$ lower than $m_0$.\\
Let us consider now the following systems
\[
\left\{\begin{array}{lc}
         \al_{m_0}(x)y^{m_0}+\al_{v_0}(x)y^{v_0}+q_1(x,y)=0 & (13) \\ [1.5ex]
         \bi_{m_0}(x)y^{m_0}+\bi_{v_0}(x)y^{v_0}+q_2(x,y)=0 & (14) \\[1.5ex]
         y\cdot\al_{m_0}(x)\cdot\al_{v_0}(x)\bi_{m_0}(x)\cdot\bi_{v_0}(x)D\neq0 &
       \end{array}\right. \eqno{\mbox{(A)}}
\]
\[
\left\{\begin{array}{lc}
  Dy^{v_0}-D_2=0 & (15) \\ [1.5ex]
  D_2y^{m_0-v_0}-D_1=0 & (16) \\ [1.5ex]
  y\cdot\al_{m_0}(x)\cdot\al_{v_0}(x)\cdot\bi_{m_0}(x)\cdot\bi_{v_0}(x)
  \cdot D\neq0 &
\end{array}\right. \eqno{\mbox{(B)}}
\]
where $D=\al_{m_0}(x)\bi_{v_0}(x)-\al_{v_0}(x)\bi_{m_0}(x)$.
\[
\begin{array}{l}
  D_1=\al_{v_0}(x)q_2(x,y)-\bi_{v_0}(x)q_1(x,y), \\ [2ex]
  D_2=\bi_{m_0}(x)q_1(x,y)-\al_{m_0}(x)q_2(x,y).
\end{array}
\]
It is obvious that $deg_y(Dy^{v_0})=v_0<m_0$, because $D\neq0$ and $deg_y(D_2y^{m_0-v_0})<m_0$, because $deg_yD_2<v_0$, by our suppositions. We will prove now that $L_A=L_B$. It is obvious that $L_A\subseteq L_B$ (17) from the previous procedure, because we got equations (11) and (12) of system ($\Ga$) from equations of system A.

Now, let $(x_0,y_0)\in L_B$.

Through equations (15), (16) of (B) and the fact that $y_0\neq0$, we get:
\[
y^{v_0}_0=\frac{D_2}{D} \ \ \text{(18)} \ \ \text{and} \ \ y^{m_0-v_0}_0=\frac{D_1}{D_2} \ \ \text{(19)}
\]
Through equations (18) and (19) we get: $y^{m_0}_0=\dfrac{D_1}{D}$ (20). Now, we consider system (B). Because $D\neq0$ this system has unique solution $(z_0,\oo_0)=\Big(\dfrac{D_1}{D},\dfrac{D_2}{D}\Big)$ (21), from Cramer's Law. Through (18), (20) and (21) we get $z_0=y^{m_0}_0$ (22) and $\oo_0=y^{v_0}_0$ (23).

Replacing (22) and (23) in equations of (B) we take it that $(x_0,y_0)\in L_A$, so $L_B\subseteq L_A$ (24). By (17) and (24), we have $L_A=L_B$ (25). The equality (25) means that: in order to solve system (A), it suffices to solve system (B), whose degree with respect to $y$ is smaller than $m_0$, that is the degree of system (A) with respect to $y$. But with the induction step, we can solve a system whose degree with respect to $y$ is smaller than $m_0$, and thus we complete this case.
\smallskip

{\bf The second basic case is the following} $D\not\equiv0$, but
\[
D(x_0)=\al_{m_0}(x_0)\bi_{v_0}(x_0)-\al_{v_0}(x_0)\bi_{m_0}(x_0)=0.
\]
In this case the two equations of system (A) are equivalent to those of linear algebra, as we have shown in prerequisites.

So, we can solve this case as follows:

We find the roots of polynomial $D=\al_{m_0}(x)\bi_{v_0}(x)-\al_{v_0}(x)\bi_{m_0}(x)$, so that: $\al_{m_0}(x)\cdot\al_{v_0}(x)\cdot\bi_{n_0}(x)\cdot\bi_{\mi_0}(x)$.

For every such root $x_0$ we find $y_0$ from one of the equations of (A) that are equivalent. We can complete this case by finding the solutions of the form $(x,0)$ (if any).\medskip

{\bf Third basic case (singular case)}

We suppose that $D\equiv0\equiv\al_{m_0}(x)\bi_{v_0}(x)-\al_{v_0}(x)\bi_{m_0}(x)$. We call this case {\bf the singular case}.

We consider the system:
\[
\left\{\begin{array}{lc}
         \al_{m_0}(x)y^{m_0}+\al_{v_0}(x)y^{v_0}+q_1(x,y)=0 & (1) \\ [1.5ex]
         \bi_{m_0}(x)y^{m_0}+\bi_{v_0}(x)y^{v_0}+q_2(x,y)=0 & (2) \\ [1.5ex]
         \al_{m_0}(x)\al_{v_0}(x)\bi_{m_0}(x)\bi_{v_0}(x)\neq0 &
       \end{array}\right.. \eqno{\mbox{(A)}}
\]
We consider our general supposition. That is, we suppose $L_A\neq\emptyset$. Let $(x_0,y_0)\in L_A$. Then we get:
\[
\left\{\begin{array}{cc}
  \al_{m_0}(x_0)y^{m_0}_0+\al_{v_0}(x_0)y^{v_0}_0=-q_1(x_0,y_0) & (3) \\ [2ex]
  \bi_{m_0}(x_0)y^{m_0}_0+\bi_{v_0}(x_0)y^{v_0}_0=-q_2(x_0,y_0) & (4)
\end{array}\right.. \eqno{\mbox{(B)}}
\]
We get
\[
D(x_0)=\al_{m_0}(x_0)\bi_{v_0}(x_0)-\al_{v_0}(x_0)\bi_{m_0}(x_0)=0.
\]
Let
\[
D_1(x_0,y_0)=\al_{v_0}(x_0)q_2(x_0,y_0)-\bi_{v_0}(x_0)q_1(x_0.y_0)
\]
\[
D_2(x_0,y_0)=\bi_{m_0}(x_0)q_1(x_0,y_0)-\al_{m_0}(x_0)q_2(x_0,y_0).
\]

We now consider the following system
\[
\left\{\begin{array}{cc}
 \al_{m_0}(x_0)z+\al_{v_0}(x_0)\oo=-q_1(x_0,y_0)  & (5) \\ [2ex]
  \bi_{m_0}(x_0)z+\bi_{v_0}(x_0)\oo=-q_2(x_0,y_0) & (6)
\end{array}\right. \eqno{\mbox{($\Ga$)}}
\]
Through the previous system (B) we have that $(y^{m_0}_0,y^{v_0}_0)$ is a solution of ($\Ga$). That is, ($\Ga$) is a linear system that has a solution and $D(x_0)=0$. So, we have that\linebreak $D_1(x_0,y_0)=0$ through linear algebra.

We consider now the following two systems: (A) and the following:
\[
\left\{\begin{array}{lc}
        \al_{m_0}(x)y^{m_0}+\al_{v_0}(x)y^{v_0}+q_1(x,y)=0  & (7) \\ [1.5ex]
         \al_{v_0}(x)q_2(x,y)-\bi_{v_0}(x)q_1(x,y)=0 & (8) \\[1.5ex]
         y\cdot\al_{m_0}(x)\al_{v_0}(x)\bi_{m_0}(x)\bi_{v_0}(x)\neq0 &
       \end{array}\right. \eqno{\mbox{($\De$)}}
\]
From the above we have $L_A\subseteq L_\De$ (9).

We can now prove the reverse inclusion of (9).

Let $(x_0,y_0)\in L_\De$. Of course $(x_0,y_0)$ satisfies equation (1) of (A). We distinguish two cases: \medskip

(i) $q_1(x_0,y_0)\neq0$.

We can consider linear system ($\Ga$). This system has $D=D_1=0$ by our supposition $D\equiv0$ and $D_1=0$ because (8) holds for $(x_0,y_0)$, that is $D_1(x_0,y_0)=0$. So, system ($\Ga$) has an infinity of solutions and $D_2(x_0,y_0)=0$, because $D=D_1(x_0,y_0)=0$. Of course we have: $\al_{m_0}(x_0)q_{v_0}(x_0)\bi_{m_0}(x_0)\bi_{v_0}(x_0)\neq0$, by our supposition.

By relation $D(x_0)=0$ we take:
\[
\al_{m_0}(x_0)\bi_{v_0}(x_0)-\al_{v_0}(x_0)\bi_{m_0}(x_0)=0\Leftrightarrow
\frac{\al_{m_0}(x_0)}{\bi_{m_0}(x_0)}=\frac{\al_{v_0}(x_0)}{\bi_{v_0}(x_0)}. \eqno{\mbox{(10)}}
\]
By equation $D_1(x_0,y_0)=0$ we take:
\[
\al_{v_0}(x_0)q_2(x_0,y_0)=\bi_{ v_0}(x_0)q_1(x_0,y_0)\Rightarrow
\frac{\al_{v_0}(x_0)}{\bi_{v_0}(x_0)}=\frac{q_1(x_0,y_0)}{q_2(x_0,y_0)} \eqno{\mbox{(11)}}
\]
We have $q_2(x_0,y_0)\neq0$ or else if $q_2(x_0,y_0)=0\Rightarrow q_1(x_0,y_0)=0$, that is false by our supposition. So, (11) holds. By (10) and (11) we set
\[
0\neq\la=\frac{\al_{m_0}(x_0)}{\bi_{m_0}(x_0)}=\frac{\al_{v_0}(x_0)}{\bi_{v_0}(x_0)}=
\frac{q_1(x_0,y_0)}{q_2(x_0,y_0)}\Rightarrow\bi_{m_0}(x_0)=\frac{1}{\la}\al_{m_0}(x_0), \eqno{\mbox{(12)}}
\]
\[
\bi_{v_0}(x_0)=\frac{1}{\la}\al_{v_0}(x_0),  \eqno{\mbox{(13)}}
\]
\[
q_2(x_0,y_0)=\frac{1}{\la}q_1(x_0,y_0).  \eqno{\mbox{(14)}}
\]
By (12), (13) and (14) we get:
\begin{align*}
\bi_{m_0}(x_0)y^{m_0}_0+\bi_{v_0}(x_0)y^{v_0}_0+q_2(x_0,y_0)&=
\frac{1}{\la}\al_{m_0}(x_0)y^{m_0}_0+\frac{1}{\la}\al_{v_0}(x_0)y^{v_0}_0+\frac{1}{\la}
q_1(x_0,y_0)\\
&=\frac{1}{\la}(\al_{m_0}(x_0)y^{m_0}_0+\al_{v_0}(x_0)y^{v_0}_0+q_1(x_0,y_0)\\
&=\frac{1}{\la}\cdot0=0,
\end{align*}
because $(x_0,y_0)\in L_\De$, which means that $(x_0,y_0)$ satisfies equality (7). So, we proved that if $(x_0,y_0)\in L_\De$ and $q_1(x_0,y_0)\neq0$, then $(x_0,y_0)\in L_A$.\medskip

(ii) $q_1(x_0,y_0)=0$.

Then, because $(x_0,y_0)\in L_\De$, through equality (8) we get: $q_2(x_0,y_0)=0$, because $\al_{v_0}(x_0)\neq0$, by our supposition.

As previously, because $D(x_0)=0$ and $\bi_{m_0}(x_0)\bi_{v_0}(x_0)\neq0$, we take it that: (12), and (13) holds, so
\begin{align*}
\bi_{m_0}(x_0)y^{m_0}_0+\bi_{v_0}(x_0)y^{v_0}_0+q_2(x_0,y_0)
&=\frac{1}{\la}\al_{m_0}(x_0)y^{m_0}_0+\frac{1}{\la}\al_{v_0}(x_0)y^{v_0}_0+0\\
&=\frac{1}{\la}(\al_{m_0}(x_0)y^{m_0}_0+\al_{v_0}(x_0)y^v_0+q_1(x_0,y_0)=0,
\end{align*}
by equality (7) of $(\De)$ because $(x_0,y_0)\in L_\De$ by our supposition.

So, equality (2) of (A) holds, that is $(x_0,y_0)\in L_A$. So, we have $L_\De\subseteq L_A$ (15). Through (9) and (15), we get: $L_A=L_\De$. So, in order to solve system (A), it suffices to solve system $(\De)$. What is the profit from system $(\De)$. The profit is that polynomial in equation (8) of $(\De)$, that is $D_1$, has $deg_yD_1(x,y)<v_0$ or $D_1(x,y)\equiv0$. We examine now how we exploit these facts.

We leave the case $D_1(x,y)\equiv0$, for the end.

We examine now the case where $D_1(x,y)\not\equiv0$. We can write $D_1(x,y)$ in the following form:
\[
D_1(x,y)=\al_{v_1}(x)y^{v_1}+\al_{v_2}(x)y^{v_2}+q_3(x,y),
\]
where $v_0>v_1>v_2$, $deg_yq_3(x,y)<v_2$, or $q_3(x,y)\equiv0$. This is the general case.

We suppose, also, that $\al_{v_1}(x)\not\equiv0$, $\al_{v_2}(x)\not\equiv0$ and $q_3(x,y)$ is a pure polynomial.

We get:
\[
y^{m_0-v_1}D_1(x,y)=\al_{v_1}(x)y^{m_0}+\al_{v_2}(x)y^{m_0-v_1+v_2}+y^{m_0-v_1}q_3(x,y),
\]
where $deg_y(y^{m_0-v_1}q_3(x,y))<m_0-v_1+v_2$ because $deg_yq_3(x,y)<v_2$, by our supposition.

We consider the system:
\[
\left\{\begin{array}{l}
         \al_{m_0}(x)y^{m_0}+\al_{v_0}(x)y^{v_0}+q_1(x,y)=0 \\ [1.5ex]
         \al_{v_1}(x)y^{m_0}+\al_{v_2}(x)y^{m_0-v_1+v_2}+y^{m_0-v_1}q_3(x,y)=0 \\ [1.5ex]
         y\al_{m_0}(x)\al_{v_0}(x)\bi_{m_0}(x)\bi_{v_0}(x)\neq0
       \end{array}\right.. \eqno{\mbox{(E)}}
\]
If $\al_{v_1}(x)=0$, or $\al_{v_2}(x)=0$ for $x\in\R$ we examine whether system (E) has a root of $\al_{v_1}(x)$ or $\al_{v_2}(x)$ that satisfies system (E). So, we examine the case where\linebreak $\al_{v_1}(x)\cdot\al_{v_2}(x)\neq0$.

Let
\[
D=\left|\begin{array}{cc}
          \al_{m_0}(x) & \al_{v_0}(x) \\ [2ex]
          \al_{v_1}(x) & \al_{v_2}(x)
        \end{array}\right|=\al_{m_0}(x)\al_{v_2}(x)-\al_{v_0}(x)\al_{v_1}(x).
\]
Then, system (E) is a system similar to system (A).

So, we examine the similar cases with the same way.

Here, we examine only the case where $D=\al_{m_0}(x)\al_{v_2}(x)-\al_{v_0}(x)\al_{v_1}(x)\equiv0$.\\
In this case we again reach a system similar to $(\De)$, so that the respective equation (8) of the new system has $D_1(x,y)\not\equiv0$.

We handle this case as follows:

In system (A) of page 23, we can take any of two equations in order to get an equivalent system as $(\De)$. So it helps us to take the equation in which the respective pure polynomial $q_1(x,y)$ or $q_2(x,y)$ has the smallest number of terms. For this reason in system (E) (that is similar to A) we take as a first equation (of system $(\De)$) the second equation because  this polynomial $y^{m_0-v_1}q_3(x,y)$ has at most $v_2$ terms with respect to $y$ (by its definition), where $v_2<v_1<v_0\Rightarrow v_2\le v_0-2$.

In the new system $(\De)$ we take that the respective $D_1(x,y)$ polynomial of $(\De)$ has $deg_yD_1(x,y)<v_0$, so, if we write this polynomial again in the form:
\[
D_1(x,y)=\al_{v_1}(x)y^{v_1}+\al_{v_2}(x)y^{v_2}+q_4(x,y),
\]
the new polynomial $q_4(x,y)$, has at most $v_2\le v_0-2$ terms.

So, the profit, is that the new polynomials $q_1(x,y)$, $q_2(x,y)$ of the new system $(\De)$ will have at most $v_0-2$ terms each one of them and the respective new polynomial $D_1(x,y)$ also. So, the profit is the following:

In system $(\De)$ polynomial $D_1(x,y)$ has at most $v_0$ terms with respect to $y$, whereas in a new system like $(\De)$ in a following stage the respective polynomial $D_1(x,y)$ of the new system $(\De)$ will have at most $v_0-2$ terms with respect to $y$.

With the same procedure we can see that the terms of the respective polynomials $D_1(x,y)$ are decreasing, so that after a finite number of steps we reach a polynomial $D_1(x,y)\equiv0$ or $D_1(x,y)\equiv r(x)$ for  polynomial $r(x)\not\equiv0$. If $D_1(x,y)\equiv0$ we solve this case in the final step, or else if $r(x)\not\equiv0$, it suffices to find the roots of polynomial $r(x)$, otherwise we have some of the previous cases that we have already examined.

Now we will examine the remaining case. In system (A), page 21. If $v_0\neq\mi_0$ and $n_0=m_0$ we have the first basic case where $D(x_0)\neq0$.

Now, let $m_0\neq n_0$, that is $n_0<m_0$. If $n_0\ge v_0$, we have the first basic case. So, we can examine the case $v_0>n_0$. In this case we have:
\begin{align*}
&y^{m_0-n_0}(\bi_{n_0}(x)y^{n_0}+\bi_{\mi_0}(x)y^{\mi_0}+q_2(x,y))=0\\
&\Leftrightarrow\bi_{n_0}(x)y^{m_0}+\bi_{\mi_0}(x)y^{m_0-n_0+\mi_0}+y^{m_0-n_0}
q_2(x,y)=0
\end{align*}
and instead of (A) we examine the system:
\[
\left\{\begin{array}{l}
         \al_{m_0}(x)y^{m_0}+\al_{v_0}(x)y^{v_0}+q_1(x,y)=0 \\ [2ex]
         \bi_{n_0}(x)y^{m_0}+\bi_{\mi_0}(x)y^{m_0-n_0+\mi_0}+y^{m_0-n_0}q_2(x,y)=0 \end{array}\right.
\]
This system is of the case where $m_0=n_0$, which we have already examined. So, up to now, we have examined all the possible cases of the initial system except one only, that we will examine now.

In the third basic case we will examine now the case where $D_1(x,y)\equiv0$. Then, as in pages 23, 24 we take it that $D_2(x,y)\equiv0$, also that for every $(x,y)\in\R^2$ there exists  $c\in\R$, such that
\[
\al_{m_0}(x)y^{m_0}+\al_{v_0}(x)y^{v_0}+q_1(x,y)=c\cdot(\bi_{m_0}(x)y^{m_0}+\bi_{v_0}(x)
y^{v_0}+q_2(x,y)) \eqno{\mbox{($\ast$)}}
\]
and $c\neq0$. The number $c$ depends on the couple $(x,y)$, so it is better to write $c(x,y)$, instead of $c$.

Now, we will consider system (A$^\ast$)
\[
\left\{\begin{array}{l}
         \al_{m_0}(x)y^{m_0}+\al_{v_0}(x)y^{v_0}+q_1(x,y)=0 \\ [2ex]
         \al_{m_0}(x)\al_{v_0}(x)\bi_{m_0}(x)\bi_{v_0}(x)\neq0
       \end{array}\right.. \eqno{\mbox{(A$^\ast$)}}
\]
Equality $(\ast)$ gives us that
\[
L_A=L_{A^\ast}.
\]
So, in order to solve system (A) it suffices to solve the ``simpler'' system (A$^\ast$) that has only one equation.

Now, it is the time to exploit the unique supposition that we have not used up to now.

That is: The set $L_A=L_{A^\ast}$ is finite. As we have seen in the prerequisites there are polynomials $p(x,y)$ of two real variables that have a finite set of roots only. \\
For example let:
\[
p(x,y)=(x^2-4)^2+(y^2-9)^2.
\]
It is easy to see that
\[
L_{p(x,y)}=\{(2,3),(2,-3),(-2,3),(-2,-3)\}.
\]
We denote:
\[
R(x,y)=\al_{m_0}(x)y^{m_0}+\al_{v_0}(x)y^{v_0}+q_1(x,y),
\]
for simplicity.

So, we solve the system:
\[
\left\{\begin{array}{l}
         R(x,y)=0 \\ [2ex]
         \al_{m_0}(x)\al_{v_0}(x)\bi_{m_0}(x)\bi_{v_0}(x)\neq0
       \end{array}\right.. \eqno{\mbox{(A$^\ast$)}}
\]
Of course, we get $R(x,y)\not\equiv 0$, because $\al_{m_0}\not\equiv0$.

Now, it is the time to use the results of our prerequisites.

By the suppositions of the third case we get $\al_{m_0}(x)\not\equiv0$ and $m_0>1$, that gives that $R(x,y)$ is a pure polynomial, that has a finite set of roots, non empty.

We apply Corollary 3.16 by our prerequisites and we take it that 0 is the global maximum or minimum of $R(x,y)$.

Without loss of generality we suppose that 0 is the global minimum of $R(x,y)$. This means that if we consider the function $F:U\ra\R$ (where\\ $U=\{(x,y)\in\R^2\mid\al_{m_0}(x)\al_{v_0}(x)\bi_{m_0}(x)\bi_{v_0}(x)\neq0\}$ is an open subset of $\R^2$).\linebreak $F((x,y))=R(x,y)$ for every $(x,y)\in U$, then it holds $F((x,y))\ge0$ for every $(x,y)\in U$, and there exists  $(x_0,y_0)\in U$ so that $F((x_0,y_0))=0$.

Let $(x_0,y_0)\in\R^2$ so that $(x_0,y_0)\in L_{A^\ast}$. Then, we have $F((x_0,y_0))=0$ and function $F$ has a global minimum in $(x_0,y_0)$. Then, by Theorem 3.17 we get $\nabla F(x_0,y_0)=(0,0)$. So we have: $\dfrac{\partial F}{\partial y}((x_0,y_0))=0$.

We can now consider the system:
\[
\left\{\begin{array}{l}
         F((x,y))=0 \\ [1.5ex]
         \dfrac{\partial F}{\partial y}((x,y))=0 \\ [1.5ex]
         \al_{m_0}(x)\al_{v_0}(x)\bi_{m_0}(x)\bi_{v_0}(x)\neq0
       \end{array}\right.. \eqno{\mbox{((A$_1$)}}
\]
Of course we get $L_{A_1}\subseteq L_{A^\ast}=L_A$ and by the above we also get: $L_A^\ast\subseteq L_{A_1}$. So we get:
\[
L_{A_1}=L_A.
\]
So, in order to solve system (A$^\ast$) it suffices to solve system (A$_1$). We need to write a more analytic system (A$_1$). We get:
\[
\left\{\begin{array}{l}
         \al_{m_0}(x)y^{m_0}+\al_{v_0}(x)y^{v_0}+q_1(x,y)=0 \\ [1.5ex]
         m_0\al_{m_0}(x)y^{m_0-1}+v_0\al_{v_0}(x)y^{v_0-1}+
         \dfrac{\partial q_1}{\partial y}(x,y)=0 \\ [1.5ex]
         \al_{m_0}(x)\al_{v_0}(x)\bi_{m_0}(x)\bi_{v_0}(x)\neq0
       \end{array}\right.  \eqno{\mbox{(A$_1$)}}
\]
Because $m_0>v_0\Rightarrow m_0-1\ge v_0$. This shows that system (A$_1$) is the first basic case, and so we can transfer system (A$_1$) to a system that has smaller than $m_0$ degree with respect to $y$ that we can solve with the induction step. So, inductively we have managed to solve the initial system in any case. So, we have completed our second stage.
\subsection{Third stage} % 2.3
Let a polynomial
\[
p(z)=\al_0+\al_1z+\cdots+\al_{v-1}z^{v-1}+\al_vz^v,
\]
for $v\in\N$, $\al_1\in\C$, for $i=0,1,\ld,v$, $\al_v\neq0$, of one complex variable.

We are now ready to solve completely the equation $p(z)=0$, or in other words to find the roots of polynomial $p(z)$ with degree $v$.

We distinguish two cases: \medskip

(i) $\al_i\in\R$ for every $i=0,1,\ld,v$, and (ii) $\al_i\in\C$, $i=0,1,\ld,v$. Firstly, we prove the following lemma:
\setcounter{lem}{3}
\begin{lem}\label{lem2.4} (A well known lemma).

Let $p(z)$, be a polynomial as above with degree $v=deg p(z)\in\N$. Then, there exist two polynomials $p_1(x,y)$, $p_2(x,y)$ of two real variables with real coefficients, so that it holds:
\[
p(x+yi)=p_1(x,y)+ip_2(x,y)
\]
for every $(x,y)\in\R^2$.
\end{lem}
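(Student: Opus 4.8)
The plan is to reduce everything to separating real and imaginary parts one power at a time. First I would write each coefficient in the form $\al_k = a_k + i b_k$ with $a_k, b_k \in \R$ for $k = 0, 1, \ld, v$, so that
\[
p(x+yi) = \ssum_{k=0}^{v}(a_k + i b_k)(x+yi)^k .
\]
In this way the whole statement is reduced to understanding the real and imaginary parts of a single power $(x+yi)^k$, and then summing.

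Second, I would prove by induction on $k$ that there exist real polynomials $u_k(x,y)$ and $v_k(x,y)$ in the two real variables $x,y$ with $(x+yi)^k = u_k(x,y) + i\,v_k(x,y)$. For the base case $k=0$ take $u_0\equiv 1$ and $v_0\equiv 0$. For the inductive step I would multiply by $x+yi$:
\[
(x+yi)^{k+1} = (x+yi)\big(u_k + i\,v_k\big) = \big(x u_k - y v_k\big) + i\big(x v_k + y u_k\big),
\]
which yields the recursion $u_{k+1} = x u_k - y v_k$ and $v_{k+1} = x v_k + y u_k$. Since sums and products of real polynomials are again real polynomials, this keeps both parts real polynomials at every step.

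Third, combining the two ingredients, for each $k$ one has $(a_k + i b_k)(u_k + i v_k) = (a_k u_k - b_k v_k) + i(a_k v_k + b_k u_k)$, and summing over $k$ I would simply define
\[
p_1(x,y) := \ssum_{k=0}^{v}\big(a_k u_k - b_k v_k\big), \qquad p_2(x,y) := \ssum_{k=0}^{v}\big(a_k v_k + b_k u_k\big).
\]
These are real polynomials in $x,y$, and by construction $p(x+yi) = p_1(x,y) + i\,p_2(x,y)$ for every $(x,y)\in\R^2$, which is exactly the claim.

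There is no genuine obstacle here; the only place that demands a little care is the bookkeeping of the powers of $i$, which the induction sidesteps entirely by replacing binomial expansion with the two-term recursion above. Equivalently one could expand $(x+yi)^k$ directly by the binomial theorem and split the resulting sum according to the parity of the exponent of $i$, using $i^{2m}=(-1)^m$ and $i^{2m+1}=(-1)^m i$ to read off $u_k$ from the even terms and $v_k$ from the odd ones; but the induction is cleaner and stays squarely within the elementary toolkit the paper advertises.
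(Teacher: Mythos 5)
Your proof is correct, and it takes a genuinely cleaner route than the paper's. The paper argues in two passes: first for real coefficients, by strong induction on the degree of $p$ (with explicit base cases $v=1$ and $v=2$), writing $p(z)=q(z)+\al_{k_0+1}z^{k_0+1}$, applying the inductive hypothesis to $q$, and expanding the top monomial $(x+yi)^{k_0+1}$ by the binomial theorem, sorting terms by the parity of the exponent of $i$ via $i^{2\rho}=(-1)^{\rho}$ and $i^{2\rho+1}=\pm i$; then, for complex coefficients $\al_j=\bi_j+\ga_j i$, it splits $p$ into one real-coefficient polynomial plus $i$ times another and applies the first case twice. You instead absorb the complex-coefficient case at the outset by writing $\al_k=a_k+ib_k$, and you replace the binomial expansion with an induction on the exponent $k$ through the recursion $u_{k+1}=xu_k-yv_k$, $v_{k+1}=xv_k+yu_k$, i.e.\ multiplication by $x+yi$. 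This buys you a shorter argument with no case distinction between real and complex coefficients and no parity bookkeeping for powers of $i$ — precisely the spot where the paper's displayed formulas are most error-prone (its exponent $(-1)^{(n_0+1-j)/2}$ mixes up $n_0$ and $k_0$, for instance). What the paper's binomial route buys in exchange is a closed-form expression for the real and imaginary parts, with explicit coefficients $\binom{k_0+1}{j}$, rather than your recursively defined $u_k,v_k$; your closing remark correctly identifies this binomial alternative, and it is essentially the one the paper follows. Both arguments are elementary and establish the identity for every $(x,y)\in\R^2$, so the existence claim is fully proved either way.
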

\begin{proof}
We can prove this lemma with induction above the degree $v$ of $p(z)$. Let $p(z)=\al_0+\al_1z$, $\al_0,\al_1\in\R$, $\al_1\neq0$. Let $(x,y)\in\R^2$. We get:
\[
p(x+yi)=\al_0+\al_1(x+yi)=(\al_0+\al_1x)+\al_1yi, \ \ \text{for} \ \ v=1
\]
so for $p_1(x,y)=\al_0+\al_1x$ and $p_2(x,y)=\al_1y$, the result holds.\smallskip

For $v=2$.\smallskip\\
Let $p(z)=\al_0+\al_1z+\al_2z^2$, where $\al_0,\al_1,\al_2\in\R$, $\al_2\neq0$.

Let $z=x+yi$, $(x,y)\in\R^2$. We get:
\begin{align*}
p(z)&=p(x+yi)=\al_0+\al_1(x+yi)+\al_2(x+yi)^2\\
&=(\al_0+\al_1+\al_2x^2-\al_2y^2)+(\al_1y+2\al_2xy)i,
\end{align*}
so for $p_1(x,y)=\al_0+\al_1x+\al_2x^2-\al_2y^2$ and $p_2(x,y)=\al_1y+2\al xy$, the result holds. We suppose now, that the result holds for any $1\le i\le k_0\in\N$. We can prove that result holds for $k_0+1$.

Let
\[
p(z)=\al_0+\al_1z+\cdots+\al_{n_0}z^{k_0}+\al_{k_0+1}z^{k_0+1},
\]
be a polynomial with $\al_{k_0+1}\neq0$, $\al_i\in\R$, for every $i=0,1,\ld,k_0+1$.

Let $(x,y)\in\R^2$. We have:
\[
p(z)=q(z)+\al_{k_0+1}z^{k_0+1},
\]
we distinguish two cases:\medskip

(a) $q(z)\not\equiv0$. Then, through the induction step we can show that there exist two polynomials $p_1(x,y)$, $p_2(x,y)$ of two real variables $x$ and $y$ with real coefficients, so that:
\[
q(x+y_i)=p_1(x,y)+p_2(x,y)i \ \ \text{for every} \ \ (x,y)\in\R^2. \eqno{\mbox{(1)}}
\]
We get:
\begin{align*}
\al_{k_0+1}z^{k_0+1}=&\,\al_{k_0+1}(x+yi)^{k_0+1}=\al_{k_0+1}\sum^{k_0+1}_{j=0}
\binom{k_0+1}{j}x^j\cdot(yi)^{k_0+1-j}\\
=&\,\al_{k_0+1}\sum^{k_0+1}_{j=0}\binom{k_0+1}{j}x^jy^{k_0+1-j}i^{k_0+1-j} \\
=&\sum_{\scriptsize{\begin{array}{c}
          k_0+1-j=2\rho \\
          \rho\in \N \\
          \al<j\le k_0+1
        \end{array}}}\al_{k_0+1}\binom{k_0+1}{j}x^jy^{k_0+1-j}(-1)^{(n_0+1-j)/2}\\
        &+\sum_{\scriptsize{\begin{array}{c}
          k_0+1-j=2\rho+1 \\
          \rho\in \N \\
          0\le j\le k_0+1
        \end{array}}}\al_{k_0+1}x^jy^{k_0+1-j}i^{k_0+1-j}\\
=&\,q_1(x,y)+iq_2(x,y)   \hspace*{6.5cm} {\mbox{(2)}}
\end{align*}
where
\[
q_1(x,y)=\sum_{\scriptsize{\begin{array}{c}
          k_0+1-j=2\rho \\
          \rho\in \N \\
          0\le j\le k_0+1
        \end{array}}}\al_{k_0+1}\binom{k_0+1}{j}x^jy^{k_0+1-j}(-1)^{(n_0+1-j)/2} \ \ \text{and}
\]
\[
iq_2(x,y)=\sum_{\scriptsize{\begin{array}{c}
          k_0+1-j=2\rho+1 \\
          \rho\in \N \\
          0\le j\le k_0+1
        \end{array}}}\al_{k_0+1}x^jy^{k_0+1-j}i^{k_0+1-j}
\]
where $q_1(x,y)$, $q_2(x,y)$ are two polynomials of the two real variables with real coefficients because $i^{2v+1}=i$ or $-i$, $v\in\N$.

So, we get: $\al_{k_0+1}z^{k_0+1}=q_1(x,y)+iq_2(x,y)$. So, we get: by (1) and (2)
\begin{align*}
p(z)&=q(z)+\al_{k_0+1}z^{k_0+1}=(p_1(x,y))+p_2(x,y)i)+(q_1(x,y)+q_2(x,y)i) \\
&=(p_1(x)y)+q_1(x,y))+(p_2(x,y)+q_2(x,y))i
\end{align*}
and the result also holds for every $(x,y)\in\R^2$.

(b) $q(z)\equiv0$. Then, with the above equality (2) we get\\ $p(z)=\al_{k_0+1}z^{k_0+1}=q_1(x,y)+iq_2(x,y)$ for every $(x,y)\in\R^2$ and the result also holds. So, by induction we see that the result holds in this case.

Now, we suppose that $\al_i\in\C$, for every $i=0,1,\ld,v$.

Let $\al_j=\bi_j+\ga_ji$ for every $j=0,1,\ld,v$, where $\bi_j,\ga_j\in\R$ for every $j=0,1,\ld,v$. Let $z=x+yi\in\C$, $(x,y)\in\R^2$. We get:
\begin{align*}
p(z)&=p(x+yi)=\al_0+\al_1z+\cdots+\al_{v-1}z^{v-1}+\al_vz^v \\
&=(\bi_0+\ga_0i)+(\bi_1+\ga_1i)z+\cdots+(\bi_{v-1}+\ga_{v-1}i)z^{v-1}
+(\bi_v+\ga_vi)z^v \\
&=(\bi_0+\bi_1z+\cdots+\bi_{v-1}z^{v-1}+\bi_vz^v)+(\ga_0+\ga_1z+\cdots+
\ga_{v-1}z^{v-1}+\ga_vz^v)i.  \hspace*{0.2cm} (3)
\end{align*}
In the previous case (i) we see that there exist polynomials $p_1(x,y)$, $p_2(x,y)$, $q_1(x,y)$, $q_2(x,y)$ of the two real variables $x$ and $y$ with real coefficients, so that
\[
\bi_0+\bi_1z+\cdots+\bi_{v-1}z^{v-1}+\bi_vz^v=p_1(x,y)+p_2(x,y)i   \ \eqno{\mbox{(4)}}
\]
and
\[
\ga_0+\ga_1z+\cdots+\ga_{v-1}z^{v-1}+\ga_vz^v=q_1(x,y)+q_2(x,y)i \eqno{\mbox{(5)}}
\]
for every $(x,y)\in\R^2$.

By (3), (4) and (5) we get:
\begin{align*}
p(z)&=(p_1(x,y)+p_2(x,y)i)+(q_1(x,y)+q_2(x,y)i)i \\
&=(p_1(x,y)-q_2(x,y))+(p_2(x,y)+q_1(x,y))i
\end{align*}
and the result holds also. \qb \medskip

With the help of this Lemma we can now solve the equation $p(z)=0$ as follows:

We examine the general case where
\[
p(z)=\al_0+\al_1z+\cdots+\al_{v-1}z^{v-1}+\al_vz^v, \ \ v\in\N, \ \ \al_v\neq0, \ \ \al_i\in\C,
\]
for every $i=0,1,\ld,v$.

With the help of the above lemma we write:
\[
p(x+yi)=q_1(x,y)+q_2(x,y)i \eqno{\mbox{($\ast$))}}
\]
for every $(x,y)\in\R^2$, where $q_1(x,y),q_2(x,y)$ are two polynomials of two real variables $x$ and $y$ with real coefficients.

Let $A$ be the set of roots of $p(z)$. We consider the system:
\[
\left\{\begin{array}{l}
         q_1(x,y)=0 \\ [2ex]
         q_2(x,y)=0
       \end{array}\right.. \eqno{\mbox{(B)}}
\]
It is obvious from the above equality $(\ast)$ that $A=L_B$. So, in order to find all the roots of A, it suffices to find all the real roots of system (B).

So, we solve system B with the method we have developed in the second stage, and thus we find all the roots of polynomial $p(z)$.

Our method has been completed now because our supposition (S) (that system (B) has a solution) is satisfied because the same holds for (A). So, in all the cases we can reduce our initial system to a system in which the two polynomials have a lower degree than that of the polynomials of the initial system. Thus, we apply the induction step and the system is solved inductively.
\end{proof}
\section{Prerequisites}\label{sec3}
\noindent

a) Prerequisites from Algebra.\medskip

We use some basic tools and results from theory of polynomials.

We denote $\C[z]$ as the set of complex polynomials. We denote $\R[x]$ as the set of real polynomials, that is the set of polynomials of one real variable with coefficients in the set of real numbers $\R$.

We begin with the following basic result, that is a simple implication of the algorithm of Euclidean division.
\begin{prop}\label{prop3.1}
Let $p(z)\in\C[z]$, $deg p(z)\ge1$. The number $r\in\C$ is a root of $p(z)$ if and only if there exists a unique polynomial $q(z)\in\C[z]$ so that:
\[
p(z)=(z-r)q(z). \quad \blacksquare
\]
\end{prop}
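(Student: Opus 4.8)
The plan is to derive the whole statement from the Euclidean division algorithm for polynomials over $\C$, which is the single tool the proposition is advertised to rest upon. First I would dispose of the easy implication. If $p(z)=(z-r)q(z)$ for some $q(z)\in\C[z]$, then substituting $z=r$ gives $p(r)=(r-r)q(r)=0$, so $r$ is a root. No division is needed for this direction; it is pure evaluation.

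For the converse, suppose $r$ is a root of $p(z)$. I would divide $p(z)$ by the degree-one polynomial $z-r$. The division algorithm yields $p(z)=(z-r)q(z)+s(z)$ with $\deg s(z)<\deg(z-r)=1$, so that the remainder $s(z)$ is forced to be a constant, say $s(z)\equiv c$. Evaluating this identity at $z=r$ gives $p(r)=c$, and since $p(r)=0$ by hypothesis, we conclude $c=0$ and hence $p(z)=(z-r)q(z)$. One also checks $\deg q(z)=\deg p(z)-1\ge 0$, so that $q(z)$ is an honest polynomial in $\C[z]$.

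For uniqueness, I would argue that if $(z-r)q_1(z)=(z-r)q_2(z)$, then $(z-r)\bigl(q_1(z)-q_2(z)\bigr)\equiv 0$; since $\C[z]$ has no zero divisors and $z-r\not\equiv 0$, this forces $q_1(z)\equiv q_2(z)$. Alternatively, uniqueness is already guaranteed by the division algorithm itself, because the quotient and remainder it produces are uniquely determined by $p(z)$ and $z-r$, so the quotient obtained above is the only candidate.

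I do not expect a serious obstacle here, since the content is the classical factor theorem. The only points that genuinely need care are two: first, observing that division by a linear polynomial necessarily leaves a constant remainder, which is exactly where the degree of the divisor enters; and second, invoking that $\C[z]$ is an integral domain (or, equivalently, the uniqueness clause of the division algorithm) to secure the uniqueness of $q(z)$. Both are immediate once the Euclidean algorithm is taken as the starting point, so the proof is short and essentially mechanical.
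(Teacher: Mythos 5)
Your proof is correct and takes precisely the approach the paper intends: the paper states Proposition 3.1 with no written proof, remarking only that it is ``a simple implication of the algorithm of Euclidean division,'' and your argument supplies exactly that implication in full. Dividing by $z-r$ to force a constant remainder, evaluating at $z=r$ to kill it, and securing uniqueness either from the integral-domain property of $\C[z]$ or from the uniqueness clause of the division algorithm are all sound and complete steps, so nothing is missing.
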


We need the definition of multiplicity of a root of a polynomial.
\begin{Def}\label{def3.1}
Let $p(z)\in\C[z]$. Let $\rho\in\C$ be a root of $p(z)$. The natural number $m$ is a multiplicity of the root $\rho$ of $p(z)$ if polynomial $(z-\rho)^m$ divides $p(z)$, whereas polynomial $(z-\rho)^{m+1}$ does not divide $p(z)$.
\end{Def}

As consequence of Proposition 3.1 there is the following proposition:
\begin{prop}\label{prop3.3}
Every root of a polynomial $p(z)\in\C[z]$ has a multiplicity, that is unique.  \qb
\end{prop}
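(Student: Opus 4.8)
The plan is to unpack the definition of multiplicity and reduce the claim to an elementary finiteness statement about the exponents that yield a divisor of $p(z)$. Throughout I assume $p(z)\not\equiv0$ (in the spirit of Proposition~3.1, where $\deg p(z)\ge1$), since otherwise every power of $(z-\rho)$ divides $p(z)$ and no multiplicity exists. So I would fix a root $\rho\in\C$ of $p(z)$ and introduce the set $S=\{k\in\N : (z-\rho)^k \text{ divides } p(z)\}$, the object on which the whole argument turns.

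First I would show $S$ is nonempty: by Proposition~3.1, $\rho$ being a root means $(z-\rho)$ divides $p(z)$, so $1\in S$. Next I would bound $S$ from above. If $(z-\rho)^k$ divides $p(z)$, write $p(z)=(z-\rho)^kq(z)$ with $q(z)\in\C[z]$; comparing degrees and using $p(z)\not\equiv0$ gives $\deg p(z)=k+\deg q(z)\ge k$, hence $k\le\deg p(z)$. Therefore $S\subseteq\{1,2,\ldots,\deg p(z)\}$ is a finite nonempty subset of $\N$ and possesses a maximum $m=\max S$.

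This $m$ is precisely a multiplicity of $\rho$ in the sense of the preceding definition: $(z-\rho)^m$ divides $p(z)$ because $m\in S$, while $(z-\rho)^{m+1}$ does not divide $p(z)$ because $m+1>\max S$ forces $m+1\notin S$. That settles existence. For uniqueness I would argue by contradiction: suppose $m_1<m_2$ are both multiplicities. Since $m_2$ is a multiplicity, $(z-\rho)^{m_2}$ divides $p(z)$; and since $m_1+1\le m_2$, the polynomial $(z-\rho)^{m_1+1}$ divides $(z-\rho)^{m_2}$ and hence divides $p(z)$, contradicting the clause of the definition requiring that $(z-\rho)^{m_1+1}$ does \emph{not} divide $p(z)$. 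Hence the multiplicity is unique.

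I do not anticipate a genuine obstacle here; the only point needing care is the degree bound that forces $S$ to be finite, which both excludes the degenerate zero-polynomial case and guarantees the maximum (equivalently, one may invoke well-ordering). In effect the existence half carries all the mathematical content, while uniqueness follows immediately from the ``does not divide'' clause of the definition.
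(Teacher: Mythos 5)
Your proof is correct. Note that the paper never actually writes out a proof of Proposition 3.3: it is asserted as an immediate consequence of Proposition 3.1 and closed with a box, and the nearest thing to an argument appears later in the algorithm of paragraph 3.6, which is \emph{constructive} — it repeatedly splits off a factor $(z-\rho)$ via Proposition 3.1, producing $p(z)=(z-\rho)^{j+1}q_j(z)$ until $q_j(\rho)\neq0$, and rules out non-termination by a degree count (indeed that algorithm even cites Proposition 3.3, so it cannot serve as its proof). Your route is different and more economical: you package the degree count into the set $S=\{k\in\N:(z-\rho)^k \mid p(z)\}$, get $1\in S$ from Proposition 3.1, bound $S\subseteq\{1,\ldots,\deg p(z)\}$ by comparing degrees in $p(z)=(z-\rho)^kq(z)$, and take $m=\max S$; existence then reads off from maximality, and uniqueness from the observation that if $m_1<m_2$ were both multiplicities then $(z-\rho)^{m_1+1}$ would divide $(z-\rho)^{m_2}$ and hence $p(z)$, contradicting the defining clause for $m_1$. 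The paper's iterative scheme buys an algorithm for computing the multiplicity, which it needs elsewhere; your extremal argument buys a shorter, self-contained proof of the bare existence-and-uniqueness statement, and your explicit exclusion of $p(z)\equiv0$ (without which every power of $(z-\rho)$ divides $p(z)$ and no multiplicity exists) is a point the paper's Definition 3.2 leaves implicit.
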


We state now the fundamental Theorem of algebra, whose proof is not simple and needs some tools from analysis.
\begin{thm}\label{thm3.4}
Every complex polynomial $p(z)$, with $deg p(z)\ge1$ has at least one root. \qb
\end{thm}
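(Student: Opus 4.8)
The plan is to give the classical analytic proof via the minimum modulus principle (the Argand--d'Alembert argument), which suits the educational spirit of the paper because it rests only on the extreme value theorem and on the solvability of the binomial equation $w^k=a$ treated in the appendix. Write $p(z)=a_nz^n+\cdots+a_1z+a_0$ with $a_n\neq0$ and $\deg p(z)=n\ge1$, and regard the map $z\mapsto|p(z)|$ as a continuous real-valued function on $\C\cong\R^2$. The goal is to locate a global minimizer of $|p|$ and then argue that the minimum value must be $0$.

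First I would establish the growth estimate $|p(z)|\ra\infty$ as $|z|\ra\infty$. Factoring out the leading term and applying the triangle inequality gives $|p(z)|\ge|a_n|\,|z|^n\big(1-\sum_{j<n}|a_j|/(|a_n|\,|z|^{n-j})\big)$, so that $|p(z)|\ge\tfrac12|a_n|\,|z|^n$ once $|z|$ is large. Hence there is an $R>0$ with $|p(z)|>|p(0)|$ for every $|z|>R$. On the closed disk $\{\,|z|\le R\,\}$, which is compact, the continuous function $|p|$ attains a minimum at some point $z_0$; by the choice of $R$ this $z_0$ is in fact a minimizer of $|p|$ over all of $\C$. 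These two steps are elementary and I do not expect difficulty from them.

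The heart of the argument, and the step I expect to be the main obstacle, is showing that $p(z_0)=0$. Suppose instead that $p(z_0)=c\neq0$. Expanding $p$ about $z_0$, I would write $p(z_0+w)=c+b_kw^k+b_{k+1}w^{k+1}+\cdots+b_nw^n$, where $k\ge1$ is the least index with $b_k\neq0$ (such $k$ exists since $\deg p\ge1$ and $b_n=a_n\neq0$). The idea is to steer the dominant correction $b_kw^k$ so that it points \emph{opposite} to $c$: using the solvability of the binomial equation, pick a $k$-th root $\ze$ of the complex number $-c/b_k$ and set $w=\ze t$ for a small real $t>0$, so that $b_kw^k=-c\,t^k$ and therefore $c+b_kw^k=c(1-t^k)$, whose modulus is $|c|(1-t^k)<|c|$ for $t\in(0,1)$. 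The delicate point is controlling the higher-order terms: since $|w|=|\ze|\,t=O(t)$, the tail $b_{k+1}w^{k+1}+\cdots+b_nw^n$ is $O(t^{k+1})$, which for $t$ small is negligible against the gain $|c|\,t^k$ of order $t^k$. Making this estimate precise yields $|p(z_0+w)|<|c|=|p(z_0)|$ for all sufficiently small $t$, contradicting the minimality of $|p(z_0)|$. The contradiction forces $c=0$, i.e. $p(z_0)=0$, completing the proof. The crux is thus the local descent estimate, whose engine is precisely the existence of $k$-th roots of a complex number.
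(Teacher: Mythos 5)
Your proposal is correct and takes essentially the same route as the paper's own proof in the appendix: the growth estimate plus the extreme value theorem on a closed disk (the paper's Lemma 4.2 and Theorem 4.4) produce a global minimizer $z_0$ of $|p|$, and the descent step at a supposed nonzero minimum is the same Argand-type argument, with your choice of $\ze$ satisfying $\ze^k=-c/b_k$ corresponding exactly to the paper's normalization $Q(z)=p(z+z_0)/p(z_0)$ followed by the choice $j^k=-|c_k|/c_k$ from De Moivre's theorem (Remark 4.3). The only difference is cosmetic — you steer $c+b_kw^k=c(1-t^k)$ directly, while the paper first reduces to constant term $1$ — and both control the tail by the same $O(t^{k+1})$ estimate.
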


From Theorem 3.4 and Proposition 3.1 we get the following fundamental result:
\begin{thm}\label{thm3.5}
Let $p(z)\in\C[z]$ be a complex polynomial with $deg p(z)\ge1$. Then $p(z)$ has a finite number of different roots.

Let $\rho_1,\rho_2,\ld,\rho_v$ be the different roots of $p(z)$ with respective to multiplicities $m_1,m_2,\ld,m_v$. Then, the following formula holds:
\[
p(z)=\al\cdot(z-\rho_1)^{m_1}(z-\rho_2)^{m_2}\cdots (z-\rho_v)^{m_v},
\]
where $\al\neq0$ and $\al$ is the coefficient of the monomial of greater grade $m_0=deg p(z)$, and $m_0=m_1+m_2+\cdots+m_v$. \qb
\end{thm}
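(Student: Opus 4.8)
The plan is to establish both assertions---finiteness of the set of distinct roots and the factored form---simultaneously by strong induction on the degree $n=\deg p(z)$, using Theorem 3.4 to produce a root at each stage and Proposition 3.1 to split off the corresponding linear factor.

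For the base case $n=1$, write $p(z)=\al z+\al_0$ with $\al\neq0$; the unique root is $-\al_0/\al$, and $p(z)=\al(z+\al_0/\al)$ already has the desired shape with $v=1$, $m_1=1$, and leading coefficient $\al$. For the inductive step, I assume the statement for every polynomial of degree strictly less than $n$, and take $\deg p(z)=n\ge2$. By Theorem 3.4, $p(z)$ has a root $\rho$, and by Proposition 3.1 there is a unique $q(z)\in\C[z]$ with $p(z)=(z-\rho)q(z)$. Comparing degrees gives $\deg q(z)=n-1$, so the induction hypothesis applies to $q$: it has finitely many distinct roots $\si_1,\ld,\si_s$ with multiplicities $k_1,\ld,k_s$ summing to $n-1$, and $q(z)=\al(z-\si_1)^{k_1}\cdots(z-\si_s)^{k_s}$ with $\al$ the leading coefficient of $q$. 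Substituting yields

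\[
p(z)=\al\,(z-\rho)(z-\si_1)^{k_1}\cdots(z-\si_s)^{k_s}.
\]

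It then remains to reorganize this product by collecting equal factors. If $\rho$ coincides with some $\si_j$, its exponent rises from $k_j$ to $k_j+1$; otherwise $\rho$ is adjoined as a new distinct root with exponent $1$. Either way the exponents now sum to $n$, and comparing leading coefficients of the two sides shows $\al$ is also the leading coefficient of $p$. Finiteness is then immediate, since the number of distinct roots is at most the sum of the exponents, which equals $n$.

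The step I expect to be the genuine obstacle is verifying that the exponents appearing in this product are exactly the multiplicities in the sense of Definition 3.2---that $(z-\rho_i)^{m_i}$ divides $p(z)$ while $(z-\rho_i)^{m_i+1}$ does not---and that the list $\rho_1,\ld,\rho_v$ exhausts all roots of $p$. Both facts rest on the integral-domain property of $\C[z]$: if $z_0\neq\rho$ is a root of $p$, then $(z_0-\rho)q(z_0)=0$ forces $q(z_0)=0$, so every root of $p$ is either $\rho$ or a root of $q$, which gives completeness; and since $(z-\rho_i)$ does not divide $(z-\rho_j)$ for $i\neq j$, no further factor of $(z-\rho_i)$ can be extracted beyond the $m_i$ already present, which pins down the multiplicity. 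Making these coprimality arguments precise---ideally by isolating a small lemma to the effect that $(z-\rho_i)$ cannot divide a product of powers of the other distinct linear factors---is where the care is needed; once that is in hand, the remaining manipulations are routine.
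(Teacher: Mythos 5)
Your proof is correct, but note that the paper itself never writes out a proof of Theorem 3.5: it states the theorem with only the remark that it follows ``from Theorem 3.4 and Proposition 3.1,'' and in Remark 3.7 it explicitly leaves the proof as an exercise, suggesting that one combine Proposition 3.1, Theorem 3.4, Proposition 3.3 and the multiplicity algorithm 3.6. Your induction is a legitimate realization of that exercise, but it is organized differently from the paper's hinted route. The paper's suggestion is to process one root at a time at full strength: the algorithm 3.6 extracts the entire power, writing $p(z)=(z-\rho)^{m}q_m(z)$ with $q_m(\rho)\neq0$, so that the exponent $m$ is the multiplicity in the sense of Definition 3.2 \emph{by construction} (this is exactly what the algorithm certifies), and one then recurses on $q_m$, whose roots are the remaining roots of $p$. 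You instead strip a single linear factor per step and collect equal factors at the end, which makes the induction itself trivial but defers the real content to the a posteriori identification of the collected exponents with the multiplicities --- the obstacle you correctly single out. That obstacle is genuinely resolvable along the lines you indicate, and the cleanest way to discharge it is not pairwise non-divisibility of the linear factors but cancellation plus evaluation: if $(z-\rho_i)^{m_i+1}$ divided $p(z)=\al\prod_j(z-\rho_j)^{m_j}$, then cancelling $(z-\rho_i)^{m_i}$ (legitimate since $\C[z]$ is an integral domain, the same fact the paper invokes inside algorithm 3.6) gives $\al\prod_{j\neq i}(z-\rho_j)^{m_j}=(z-\rho_i)R(z)$, and evaluating at $z=\rho_i$ makes the left side $\al\prod_{j\neq i}(\rho_i-\rho_j)^{m_j}\neq0$ while the right side vanishes --- a contradiction; the same evaluation idea shows every root of $p$ occurs among $\rho,\si_1,\ld,\si_s$, as you note. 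So the trade-off is: the paper's route gets the multiplicities for free at the cost of invoking the heavier algorithm 3.6, while yours has a lighter induction but needs this small ``prime factor'' lemma made explicit; with that lemma stated and proved as above, your argument is complete.
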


Now, we describe a simple algorithm in order to find the multiplicity of a root of a complex polynomial.\medskip\\
{\bf 3.6. An algorithm for the multiplicity of a root}\medskip

Let $p(z)\in\C[z]$ be a complex polynomial of degree $deg p(z)\ge1$.

By Theorem \ref{thm3.5} polynomial $p(z)$ has a finite number of roots. Let $\rho$ be a root of $p(z)$. We describe with details a way in order to find the multiplicity of $\rho$.

By Proposition 3.1 there exists a unique polynomial $q(z)$ so that:
\[
p(z)=(z-\rho)q(z).   \eqno{\mbox{(1)}}
\]
We find the polynomial $q(z)$ through the algorithm of Euclidean division, for example using Horner's scheme.

Afterwards, we compute the number $q(\rho)$, for example with Horner's scheme.\\
If $q(\rho)\neq0$, then the root $\rho$ has multiplicity 1. In order to prove this we suppose that the root $\rho$ does not have multiplicity 1. By Proposition 3.3 the root $\rho$ has a unique multiplicity, $m\in\N$ (see Definition 3.2). Because of $m\neq1$, we have that $m\ge2$. By the definition of multiplicity we have that polynomial $(z-\rho)^m$ divides $p(z)$. This means (by the definition of division) that there exists a polynomial $R(z)\in\C[z]$ sο that:
\[
p(z)=(z-\rho)^mR(z).  \eqno{\mbox{(2)}}
\]
By relations (1) and (2) we get:
\[
(z-\rho)q(z)=(z-\rho)^mR(z)\Leftrightarrow(z-\rho)(q(z)-(z-\rho)^{m-1}R(z)=0. \eqno{\mbox{(3)}}
\]
The expressions $z-\rho$ and $q(z)-(z-\rho)^{m-1}R(z)$ are polynomials in $\C[z]$ of course, because $m\ge2$ (as we have seen). Because of $z-\rho\not\equiv0$, we take it that
\[
q(z)-(z-\rho)^{m-1}R(z)=0, \eqno{\mbox{(4)}}
\]
because the Ring of polynomials $\C[z]$ is an integer neighbourhood, as is well known from Algebra. Relation (4) gives $q(\rho)=0$ (because $m\ge2$), which is false because we have supposed that $q(\rho)\neq0$. So, if $q(\rho)\neq0$, then root $\rho$ has multiplicity 1.

Whereas if $q(\rho)=0$, then through Proposition \ref{prop3.1}, we take it that there exists a polynomial $q_1(z)\in\C[z]$, so that:
\[
q(z)=(z-\rho)q_1(z).  \eqno{\mbox{(5)}}
\]
By (1) and (5) we take that
\[
p(z)=(z-\rho)^2q_1(z).  \eqno{\mbox{(6)}}
\]
Relation (6) tells us that polynomial $(z-\rho)^2$ divides $p(z)$. Afterwards, we find polynomial $q_1(z)$ by (5) with the Euclidean Algorithm, for example from Horner's scheme, because we have found polynomial $q(z)$ previously. After that, we compute number $q_1(\rho)$, for example with Horner's scheme. If $q_1(\rho)\neq0$, then the multiplicity of $\rho$ is 2, with a proof similar to what we had found previously. Or otherwise if $q_1(\rho)=0$, then again through Proposition \ref{prop3.1} there exists a polynomial $q_2(z)\in\C[z]$ so that
\[
q_1(z)=(z-\rho)q_2(z).  \eqno{\mbox{(7)}}
\]
Through (6) and (7) we take it:
\[
p(z)=(z-\rho)^3q_2(z).  \eqno{\mbox{(8)}}
\]
We inductively continue this procedure of finding a sequence of polynomials %
\[
q_j(z)\in\C[z],
\]
for $j=1,2,\ld$, where $q_j(z)=(z-\rho)q_{j+1}(z)$, for $j=1,2,\ld$, and
\[
p(z)=(z-\rho)^{j+1}q_j(z).
\]
If $p(z)=(z-\rho)^{j+1}q_j(z)$ for $j\in\N$, (where $q_j(\rho)\neq0)$, then the multiplicity of $\rho$ is $j+1$, with a proof similar to what we have shown previously. This procedure stops if some natural number $j\in\N$, or if $deg p(z)=v_0\in\N$, then we take it that $p(z)=(z-\rho)^{v_0+1}q_{v_0}(z)$, where $q_{v_0}(z)\neq0$ (or else $p(z)=0$, which is false because $deg p(r)\ge1$, by supposition), so $deg((z-\rho)^{v_0+1}q_{v_0}(z))\ge v_0+1$, which is false of course because $deg p(z)=v_0$.

That is we take it that $p(z)=(z-\rho)^{j+1}q_j(z)$ for some $j\in\N$, $j<v_0-1$, and $q_j(\rho)\neq0$, that gives that multiplicity of $\rho$ is $j+1<v_0$ otherwise we take it that
\[
p(z)=(z-\rho)^{v_0}q_{v_0-1}(z).  \eqno{\mbox{(9)}}
\]
Relation (9) gives that $q_{v_0-1}(z)\neq0$, (or else $p(z)=0$ which is false of course) and by relation (9) we take it also that $q_{v_0-1}(z)$ is a constant polynomial with value say $c_0$. That is $p(z)=(z-\rho)^{v_0}c_0$. Of course polynomial $(z-\rho)^{v_0+1}$ cannot divide $p(z)$, because this polynomial has a degree $deg((z-\rho)^{v_0+1})>v_0=deg p(z)$, that gives that multiplicity of $\rho$ is $v_0$ (by the definition of multiplicity). So we have described a complete algorithm that gives us the multiplicity of a root of a complex\linebreak polynomial. \qb\smallskip\\
\noindent
{\bf Remark 3.7}
{\em We can combine Proposition \ref{prop3.1} with Theorem 3.4 and the previous algorithm (and of course Proposition \ref{prop3.3}), in order to prove Theorem \ref{thm3.5}. We leave it as an easy exercise for the reader. So far we have developed all we need from polynomials of one complex variable. We also obtain some basic results from Linear Algebra. Here we will now consider the following linear system of two equations
\[
\left\{\begin{array}{lc}
         \al_1x+\bi_1y=\ga_1 & (1) \\ [2ex]
         \al_2x+\bi_2y=\ga_2 & (2)
       \end{array}\right. \eqno{\mbox{(A)}}
\]
where $\al_i,\bi_i,\ga_i\in\C$ for $i=1,2$. We consider the determinants $D,D_x,D_y$ where
\[
\begin{array}{l}
  D=\left|\begin{array}{cc}
            \al_1 & \bi_1 \\ [1.5ex]
            \al_2 & \bi_2
          \end{array}\right|=\al_1\bi_2-\al_2\bi_1,
   \\ [4ex]
  D_x=\left|\begin{array}{cc}
              \ga_1 & \bi_1 \\[1.5ex]
              \ga_2 & \bi_2
            \end{array}\right|=\ga_1\bi_2-\bi_1\ga_2,D_y
=\left|\begin{array}{cc}
\al_1 & \ga_1 \\ [1.5ex]
\al_2 & \ga_2
\end{array}\right|=\al_1\ga_2-\al_2\ga_1.
\end{array}
\]
When $D\neq0$, then system (A) has only one solution $(x_0,y_0)$, where $x_0=\dfrac{D_x}{D}$, $y_0=\dfrac{D_y}{D}$. When $D=0$ and $D_x\neq0$, or $D_y\neq0$, then system (A) does not have any solution, whereas when $D=D_x=D_y=0$, then system (A) has an infinite number of solutions except only in the case where $\al_1=\al_2=\bi_1=\bi_2=0$ and only one of the numbers $\ga_1,\ga_2$ is non zero. We need the case where $D\neq0$ and the case where $D=D_x=D_y=0$. We consider the case where $D=D_x=D_y=0$. We suppose that system (A) is a pure system of two variables $x$ and $y$, that is, we suppose that at least one of the numbers $\al_1,\al_2$ is non-zero also. That is $\al_1\neq0$ or $\al_2\neq0$ and $\bi_1\neq0$ or $\bi_2\neq0$, otherwise we do not have a system of equations of two different variables}.\medskip

We have two cases: \medskip

(i) One from the six numbers $\al_i,\bi_i,\ga_i$, $i=1,2$ is zero

Let $\al_1=0$ (3). We have $D=0$, that is $\al_1\bi_2-\al_2\bi_1=0\Rightarrow\al_1\bi_2=\al_2\bi_1$ (4). Through (3) and (4) we have $\al_2\bi_1=0$ (5). Because of $\al_1=0$ and our hypothesis we have $\al\neq0$ (6). By (5) and (6) we get $\bi_1=0$ (7).

Through (3) and the fact that $D_y=0$ we get in a similar way that $\ga_1=0$. That is the equation (1) is the equation $0\cdot x+0\cdot y=0$, with set of solutions the set $\R^2$. This means that system (A) is equivalent to the equation (2) of (A) only. If $\bi_1=0$, or $\ga_1=0$, we get in a similar way that $\al_1=\bi_1=\ga_1=0$ and we have similarly the same implication, that is system (A) is equivalent with equation (2) of (A) only. If $\al_2=0$, or $\bi_2=0$, or $\ga_2=0$, we take it that $\al_2=\bi_2=\ga_2=0$ in an analogous way and finally system (A) is equivalent with the equation (1) of (A) only.

Now, we suppose that $\al_1\bi_1\ga_1\al_2\bi_2\ga_2\neq0$, that is, non of six numbers $\al_i,\bi_i,\ga_i$, $i=1,2,$ is zero. We have $D=0\Leftrightarrow\al_1\bi_2-\al_2\bi_1=0\Leftrightarrow\dfrac{\al_2}{\al_1}
=\dfrac{\bi_2}{\bi_1}$ $(\al_1\neq0$, $\bi_1\neq0$). We set $\la=\dfrac{\al_2}{\al_1}=\dfrac{\bi_2}{\bi_1}$ (8).

We have $D_x=0\Leftrightarrow\ga_1\bi_2-\bi_1\ga_2=0\Leftrightarrow\la=\dfrac{\bi_2}{\bi_1}
=\dfrac{\ga_2}{\ga_1}$ (9). With (8) and (9) we get: $\al_2=\la\al_1$, $\bi_2=\la\bi_1$, $\la_2=\la\ga_1$, that is $\al_2x+\bi_2y=\ga_2\Leftrightarrow(\la\al_1)x+(\la\bi_1)y=(\la\ga_1)\Leftrightarrow
\la\cdot(\al_1x+\bi_1y)=\la\ga_1\overset{\la\neq0}{\Leftrightarrow}\al_1x+\bi_y=\ga_1$, that is equations (1) and (2) of (A) are equivalent, that is they have the same set of solutions, which means that system (A) is equivalent to one only from the equations (1) and (2), whichever of the two).

So, we have proved that in the case of $D=D_x=D_y=0$, system (A) has an infinite number of solutions and it is equivalent with only one from the equations (1) and (2). So we have stated our prerequisites from Algebra.\medskip \\
{\bf b) Prerequisites from Analysis}\medskip

As it is well known, by Galois theory, there are no formulas that give the roots of an arbitrary polynomial as a function of its coefficients with radicals. So, for an arbitrary polynomial the only way to find its roots is to approximate them with a numerical method. Perhaps, the simplest numerical method for algebraic equations is the bisection method, which is presented in all classical books of Numerical Analysis:

It is a simple method, and here we have based in it in our problem. The bisection method has very weak suppositions, and it is convenient for secondary students also.

Let $\al,\bi\in\R$, $\al<\bi$, and $f:[\al,\bi]\ra\R$ be a continuous function. We suppose that $f(\al)\cdot f(\bi)<0$. Then, function $f$ has a root, at least in the interval $(\al,\bi)$, and bisection method approximates a root of $f$ in $(\al,\bi)$, as closely as we want with a specific minor error.

There are many different numerical methods that find the roots in a specific interval. We will not discuss this subject. This is a vast subject in Numerical Analysis. In this text, it is enough for us to find only one root in a specific interval and approximate it using bisection method.

The solution to all the real roots of a polynomial will be based on the following basic lemma.\medskip\\
\noindent
{\bf Basic Lemma 3.8}. {\em Let $v\in\N$, $v\ge3$, $p(x)=\al_vx^v+\al_{v-1}x^{v-1}+\cdots+\al_1x+\al_0$, be a polynomial $p(x)\in\R[x]$, with degree $deg p(x)=v$.

Let $\rho_1,\rho_2,\ld,\rho_k$ be all the different real roots of polynomial $p'(x)$, $k\in\N$, $k\ge2$, $\rho_i\neq\rho_j$, for all $i,j\in\{1,2,\ld,k\}$, $i\neq j$.

Then, we can find, with an algorithm, all the real roots of $p(x)$, with their multiplicities.}
\begin{proof}
Let $L=\{\rho_1,\rho_2,\ld,\rho_k\}$, be the set of all real roots of $p'(x)$. We suppose, also, without loss of generality that $\rho_1<\rho_2<\cdots<\rho_k$.

Let $i_0\in\{1,\ld,k-1\}$. Then $p'(x)>0$ for every $x\in(\rho_{i_0},\rho_{i_0+1})$ or $p'(x)<0$ for every $x\in(\rho_{i_0},\rho_{i_0+1})$. This gives that $p$ is a strictly decreasing or strictly increasing function on $[\rho_{i_0},\rho_{i_0+1}]$. If $p(\rho_{i_0})=0$, then $\rho_{i_0}$ is the unique root of $p$ in $[\rho_{i_0},\rho_{i_0+1}]$. The same holds if $p(\rho_{i_0+1})=0$, that is $\rho_{i_0+1}$ is the unique root of $p$ in $[\rho_{i_0},\rho_{i_0+1}]$, if $p(\rho_{i_0+1})=0$.

Of course polynomial $p$ can't have the numbers $\rho_{i_0}$ and $\rho_{i_0+1}$ as roots simultaneously, by its monotonicity. We suppose now that $p(\rho_{i_0})\cdot p(\rho_{i_0+1})\neq0$. Then, if $p(\rho_{i_0})\cdot p(\rho_{i_0+1})>0$, polynomial $p$ does not have any root in $[\rho_{i_0},\rho_{i_0+1}]$. If $p(\rho_{i_0})\cdot p(\rho_{i_0+1})<0$, then $p$ has one root exactly in the interval $[\rho_{i_0},\rho_{i_0+1}]$, and more specifically this root belongs in $(\rho_{i_0},\rho_{i_0+1})$.

Applying the bisection method we find this root, because the suppositions of bisection method are satisfied now. We do the same in every interval $[\rho_i,\rho_{i+1}]$.

So we find all the roots of $p$ in the interval $[\rho_1,\rho_k]$. We examine the roots in $[\rho_k,+\infty)$. Because $\al_v\neq0$, we have two cases:
\begin{enumerate}
\item[i)] If $\al_v>0$, then $\lim\limits_{x\ra+\infty}p(x)=+\infty$.

Then $p$ is a strictly increasing function in $[\rho_k,+\infty)$.
\begin{itemize}
\item[a)]
$p(\rho_k)=0$, then $\rho_k$ is the unique root of $p$ in $[\rho_k,+\infty)$.
\item[b)] If $p(\rho_k)>0$, then $p$ does not have any root in $[\rho_k,+\infty)$.
\item[c)] If $p(\rho_k)<0$, then $p$ has one root exactly, (say $\rho_{k+1})$ in $[\rho_k,+\infty)$ and more specifically $\rho_{k+1}\in(\rho_k,+\infty)$.
\end{itemize}

Because $\lim\limits_{x\ra+\infty}p(x)=+\infty$, there exists some $x_0\in\R$, $x_0>\rho_k$, so that $p(x_0)>0$. Then $p(\rho_k)\cdot p(x_0)<0$ and thus $\rho_{k+1}\in(\rho_k,x_0)$.

Applying bisection method in $[\rho_{k+1},x_0]$, we approximate the root $\rho_{k+1}$. Later, we will see how we compute a number like $x_0$, in order to apply bisection method.
\item[ii)] If $\al_v<0$, then $\lim\limits_{x\ra+\infty}p(x)=-\infty$. Polynomial $p$ is a strictly decreasing function in $[\rho_k,+\infty)$.
\begin{itemize}
\item[a)] If $p(\rho_k)=0$, then $\rho_k$ is the unique root of $p$ in $[\rho_k,+\infty)$.
\item[b)] If $p(\rho_k)<0$, then $p$ does not have any root in $[\rho_k,+\infty)$.
\item[c)] If $p(\rho_k)>0$, then $p$ has unique one root in $[\rho_k,+\infty)$ (say $\rho_{k+1})$ and more specifically $\rho_{k+1}\in(\rho_k,+\infty)$.
\end{itemize}

Because $\lim\limits_{x\ra+\infty}p(x)=-\infty$, there exists some $x_0\in(\rho_k,+\infty)$, so that $p(x_0)<0$.

Then, $p(\rho_k)\cdot p(x_0)<0$, and $\rho_{k+1}\in(\rho_k,x_0)$, and applying bisection method we approximate the unique root $\rho_{k+1}$ in $(\rho_k,x_0)$. Now we examine the roots in $(-\infty,\rho_1]$. Whether $p(\rho_1)=0$, then $\rho_1$ is the unique root of $p$ in $(-\infty,\rho_1]$.

Now we suppose that $p(\rho_1)\neq0$. We examine two cases:
\item[i)] $\lim\limits_{x\ra-\infty}p(x)=+\infty$.

This happens when $v$ is even and $\al_v>0$, or $v$ is odd and $\al_v<0$. Then $p$ is a strictly decreasing function in $(-\infty,\rho_1]$.

i), 1) If $p(\rho_1)>0$, then $p$ does not have any root in $(-\infty,\rho_1]$.

i), 2) If $p(\rho_1)<0$, then $p$ has a unique root in $(-\infty,\rho_1]$ (say $\rho_{k+2})$ and more specifically $\rho_{k+2}\in(-\infty,\rho_1)$. Because $\lim\limits_{x\ra-\infty}p(x)=+\infty$, there exists some\linebreak $x_0<\rho_1$, so that $p(x_0)>0$. Then $\rho_{k+2}\in(x_0,\rho_1)$ and applying bisection method in $[x_0,\rho_1]$, we approximate root $\rho_{k+2}$.
\item[ii)]  $\lim\limits_{x\ra-\infty}p(x)=-\infty$. This is happened when $v$ is even and $\al_v<0$, or $v$ is odd and $\al_v>0$. Then $p$ is a strictly increasing function in $(-\infty,\rho_1]$.

    We have two cases:

    ii), 1)$p(\rho_1)<0$. Then, $p$ does not have any root in $(-\infty,\rho_1]$.

    ii), 2) $p(\rho_1)>0$. Then $p$ has unique root in $(-\infty,\rho_1]$ (say $\rho_{k+2})$ and more specifically $\rho_{k+2}\in(-\infty,\rho_1)$. Because $\lim\limits_{x\ra-\infty}p(x)=-\infty$, there exists some $x_0<\rho_1$, such that $p(x_0)<0$. Then $p(x_0)\cdot p(\rho_1)<0$ and $\rho_{k+2}\in(x_0,\rho_1)$.

    Applying bisection method in $[x_0,\rho_1]$ we approximate root $\rho_{k+2}$. All the implications of this lemma are easy to prove and are left as an easy exercise for the interested reader. The proofs are of secondary school.
\end{enumerate}
\end{proof}
\noindent
{\bf Corollary 3.9.} {\em
Basic Lemma 3.8 holds again, in the case when polynomial $p'$ has only one root}.
\begin{proof}
The proof is similar to that of basic lemma for the intervals $(-\infty,\rho_1]$ and $[\rho_1,+\infty)$, where $p'(\rho_1)=0$. \qb
\end{proof}
\noindent
{\bf Corollary 3.10.} {\em
Let $v\in\N$, $v\ge3$, $p(x)=\al_vx^v+\al_{v-1}x^{v-1}+\cdots+\al_1x+\al_0$, be a polynomial $p(x)\in\R[x]$, with degree $deg p(x)=v$.

We suppose that $p'$ does not have any root. Then $p$ has unique real root and we can construct an algorithm in order to find it}.
\begin{proof}
Of course $p'$ is a polynomial of even degree $deg p'=v-1$, so $p$ is a polynomial of odd degree. Thus $p$ has, at least, one real root. Because $p'$ does not have any root, we have $p'(x)\neq0$, for every $x\in\R$. Thus, $p'(x)>0$ for every $x\in\R$, or $p'(x)<0$ for every $x\in\R$, or else if there exist $\al,\bi\in\R$, so that $p'(\al)<0$ and $p'(\bi)>0$, (of course $\al\neq\bi$), then because $p'$ is a continuous function (as a polynomial) and $p'(\al)\cdot p'(\bi)<0$, we take it that there exists $\ga\in(\al,\bi)$ (if $\al<\bi$) or $\ga\in(\bi,\al)$  (if $\bi<\al$) so that $p'(\ga)=0$, that is a contradiction because $p'(x)\neq0$ for every $x\in\R$. Thus, $p$ is a strictly increasing function in $\R$, if $p'(x)>0$, for every $x\in\R$, or else $p$ is a strictly decreasing function in $\R$ if $p'(x)<0$ for every $x\in\R$. If $p$ is a strictly increasing function, then $\lim\limits_{x\ra+\infty}p(x)=+\infty$ and $\lim\limits_{x\ra-\infty}p(x)=-\infty$, or else if $p$ is a strictly decreasing function in $\R$, then $\lim\limits_{x\ra+\infty}p(x)=-\infty$ and $\lim\limits_{x\ra-\infty}p(x)=+\infty$.

Polynomial $p$ is a strictly increasing function if $\al_v>0$, or else if $\al_v<0$, then $p$ is a strictly decreasing function.

If $\al_v>0$, then because $\lim\limits_{x\ra+\infty}p(x)=+\infty$, there exists $y_0\in\R$, so that\linebreak $p(y_0)>0$, and because $\lim\limits_{x\ra-\infty}=-\infty$, there exists $x_0\in\R$, $x_0<y_0$, so that $p(x_0)<0$. So $p(x_0)\cdot p(y_0)<0$ and $p$ has unique root in $\R$, (say $\rho$) so that $\rho\in(x_0,y_0)$.

If $\al_v<0$, then because $\lim\limits_{x\ra+\infty}p(x)=-\infty$ there exists $y_0\in\R$, so that $p(y_0)<0$. Because $\lim\limits_{x\ra-\infty}p(x)=+\infty$, there exists $x_0\in\R$, $x_0<y_0$ so that $p(x_0)>0$. Thus $p(x_0)\cdot p(y_0)<0$, and $p$ has unique root in $\R$ (say $\rho$) so that $\rho\in(x_0,y_0)$.

We sill see later how we compute numbers $x_0,y_0$ as above.

In any of the cases above we apply the bisection method in the interval $[x_0,y_0]$, to find the unique real root of $p$. \qb
\end{proof}
\noindent
{\bf Remark 3.11}
{\em
The multiplicity of a root is found with the algebraic algorithm 3.6.

However, we can find the multiplicity of a root in an analytic way.

More specifically:\\
Let $p(x)\in\C[x]$ be a polynomial and $\rho$ be a root of $p$, where $deg p(x)=v\in\N$.

Then, there exists a unique natural number $k\in\N\cup\{0\}$ $k\le v-1$, so that:\\
$p(\rho)=0$, $p'(\rho)=0,\ld$, $p^{(k)}(\rho)=0$ and $p^{(k+1)}(\rho)\neq0$, that is $p^{(i)}(\rho)=0$, for all $i=0,1,\ld,k$ and $p^{(k+1)}(\rho)=0$, where $p^{(0)}(\rho)=p(\rho)$.

The natural number $k+1$ is the multiplicity of root $\rho$ of $p$. (Of course we have always $p^{(v)}(\rho)\neq0$)\\
This is a classical result in calculus, that is proven easily.

Now, we cover the gap from basic Lemma 3.8, computing a number like $x_0$ in this lemma}. \medskip\\
\noindent
{\bf Remark 3.12.} {\em
Let $p(x)\in\R[x]$ be a real polynomial,
\[
p(x)=\al_0+\al_1x+\cdots+\al_{v-1}x^{v-1}+\al_vx^v,\ \  v=deg p(x),\ \  v\ge3.
\]
We suppose that $\al_v>0$, and that $p'(x)$ has real roots}.
\begin{proof}
Let $\rho$ be the greater real root of $p'(x)$. We suppose that $p(\rho)<0$. We consider an arbitrary real number $x_0$, so that $x_0>\rho$, $x_0>1$ and\\
$x_0>\dfrac{|\al_0|+|\al_1|+\cdots+|\al_{v-1}|}{\al_v}$. We prove that $p(x_0)>0$.

We have of course $-|y|\le y$ for every $y\in\R$ (1). We apply (1) for

\[
y=\frac{\al_0}{x^v}+\frac{\al_1}{x^{v-1}}+\cdots+\frac{\al_{v-1}}{x}\eqno{\mbox{(1)}}
\]
for some $x\in\R-\{0\}$ and we have:
\[
-\bigg|\frac{\al_0}{x^v}+\frac{\al_1}{x^{v-1}}+\cdots+\frac{\al_{v-1}}{x}\bigg|
\le\frac{\al_0}{x^v}+\frac{\al_1}{x^{v-1}}+\cdots+\frac{\al_{v-1}}{x}.   \eqno{\mbox{(2)}}
\]
Adding the number $\al_v$ in two members of (2) we take:
\[
\al_v-\bigg|\frac{\al_0}{x^v}+\frac{\al_1}{x^{v-1}}+\cdots+\frac{\al_{v-1}}{x}\bigg|
\le\al_v+\frac{\al_0}{x^v}+\frac{\al_1}{x^{v-1}}+\cdots+\frac{a_{v-1}}{x}. \eqno{\mbox{(3)}}
\]
By the triangle inequality we take for $x>0$:
\begin{align*}
\bigg|\frac{\al_0}{x^v}+\frac{\al_1}{x^{v-1}}+\cdots+\frac{\al_{v-1}}{x}\bigg|
&\le\bigg|\frac{\al_0}{x^v}\bigg|+\bigg|\frac{\al_1}{x^{v-1}}\bigg|+\cdots+
\bigg|\frac{\al_{v-1}}{x}\bigg|\\
&\Leftrightarrow-\bigg(\frac{|\al_0|}{x^v}+\frac{|\al_v|}
{x^{v-1}}+\cdots+\frac{|\al_{v-1}|}{x}\bigg)\\
&\le-\bigg|\frac{\al_0}{x^v}+\frac{\al_1}{x^{v-1}}+\cdots+\frac{\al_{v-1}}{x}\bigg|  \hspace*{3cm} {\mbox{(4)}}
\end{align*}
Adding the number $\al_v$ in two members of (4) we take
\[
\al_v-\bigg(\frac{|\al_0|}{x^v}+\frac{|\al_1|}{x^{v-1}}+\cdots+\frac{|\al_{v-1}|}{x}\bigg)
\le\al_v-\bigg|\frac{\al_0}{x^v}+\frac{\al_1}{x^{v-1}}+\cdots+\frac{\al_{v-1}}{x}\bigg|,
\ \ \text{for} \ \ x>0.  \eqno{\mbox{(5)}}
\]
Let some $x>1$. Then we have:
\begin{align*}
&x\ge x,x^2\ge x,\ld,x^v\ge x\Rightarrow\frac{1}{x}\le\frac{1}{x}, \frac{1}{x^2}<\frac{1}{x},\ld,\frac{1}{x^v}<\frac{1}{x}\\
&\Rightarrow\frac{|\al_{v-1}|}{x}\le\frac{|\al_{v-1}|}{x},\frac{|\al_{v-2}|}{x^2}\le, \frac{|\al_{v-2}|}{x},\ld,
\frac{|\al_1|}{x^{v-1}}\le\frac{|\al_1|}{x},\frac{|\al_0|}{x^v}\le\frac{|\al_0|}{x}.
\end{align*}
Adding the above inequalities in pairs we take:
\begin{align*}
&\frac{|\al_{v-1}|}{x}+\cdots+\frac{|\al_1|}{x^{v-1}}+\frac{|\al_0|}{x^v}\le
\frac{|\al_0|+|\al_1|+\cdots+|\al_{v-1}|}{x}\Rightarrow\\
&-\frac{|\al_0|+|\al_1|+\cdots+|\al_{v-1}|}{x}\le-\bigg(\frac{|\al_{v-1}|}{x}+\cdots
+\frac{|\al_1|}{x^{v-1}}+\frac{|\al_0|}{x^v}\bigg)\Rightarrow \\
&\al_v-\frac{|\al_0|+|\al_1|+\cdots+|\al_{v-1}|}{x}\le\al_v-\bigg(\frac{|\al_{v-1}|}{x}
+\cdots+\frac{|\al_1|}{x^{v-1}}+\frac{|\al_0|}{x^v}\bigg). \hspace*{1.6cm} {\mbox{(6)}}
\end{align*}
Through inequalities (3), (5) and (6), we get:
\[
\al_v-\frac{|\al_0|+|\al_1|+\cdots+|\al_{v-1}|}{x}\le\al_v+\frac{\al_0}{x^v}+
\frac{\al_1}{x^{v-1}}+\cdots+\frac{\al_{v-1}}{x} \ \ \text{for} \ \ x>1.  \eqno{\mbox{(7)}}
\]
Now, for every $x>1$, $x>\dfrac{|\al_0|+|\al_1|+\cdots+|\al_{v-1}|}{\al_v}$, we take
\[
\al_v>\frac{|\al_0|+|\al_1|+\cdots+|\al_{v-1}|}{x}\Rightarrow\al_v-\frac{|\al_0|
+|\al_1|+\cdots+|\al_{v-1}|}{x}>0.  \eqno{\mbox{(8)}}
\]
Through (7) and (8), we take it that for every $x>1$, $x>\dfrac{|\al_0|+|\al_1|+\cdots+|\al_{v-1}|}{\al_v}$ we get
\[
\al_v+\dfrac{\al_{v-1}}{x}+\cdots+\frac{\al_1}{x^{v-1}}+\frac{\al_0}{x^v}>0.  \eqno{\mbox{($\ast$)}}
\]
This gives
\[
x^v\cdot\bigg(\al_v+\frac{\al_v-1}{x}+\cdots+\frac{\al_1}{x^{v-1}}+\frac{\al_0}{x^v}\bigg)>0\Leftrightarrow
p(x)>0 \ \ \text{(by the definition of $p(x)$)}  \eqno{\mbox{(9)}}
\]
We apply (9) for the number $x_0\in\R$ so that $x_0>\rho$, $x_0>1$ and\\
$x_0>\dfrac{|\al_0|+|\al_1|+\cdots+|\al_{v-1}|}{\al_v}$ and we take it that $p(x_0)>0$.

Thus, we have $p(\rho)\cdot p(x_0)<0$. This means that the unique real root of $p(x)$ in $[\rho,+\infty)$ belongs in $(\rho,c_0)$. Applying the bisection method in the interval $[\rho,x_0]$, we compute the unique real root $x_0^\ast$ of $p(x)$ in $[\rho,+\infty)$ that is $x^\ast_0\in(\rho,x_0)$. Of course if $p(\rho)>0$, polynomial $p$ does not have any real root in $[\rho,+\infty)$ as we have seen in basic Lemma 3.8 and if $p(\rho)=0$, then $\rho$ is the unique real root of $p$ in $[\rho,+\infty)$.

Now, we suppose that $\al_v<0$, and that $p'(x)$ has real roots.

Let $\rho$ be the greatest real root of $p'(x)$. We suppose that $p(\rho)>0$. We consider an arbitrary real number $x_0$, so that $x_0>\rho$, $x_0>1$ and
\[
x_0>\frac{|\al_0|+|\al_1|+\cdots+|\al_{v-1}|}{-\al_v}=\frac{|\al_0|+|\al_1|
+\cdots+|\al_{v-1}|}{|\al_v|}.  \eqno{\mbox{(10)}}
\]
By (10) we get (because $|\al_v|>0$ and $x>0$)
\[
|\al_v|>\frac{|\al_0|+|\al_1|+\cdots+|\al_{v-1}|}{x_0}\Rightarrow
\al_v+\frac{|\al_0|+|\al_1|+\cdots+|\al_{v-1}|}{x_0}<0.  \eqno{\mbox{(11)}}
\]
Let $x>1$. Because of $x>1$ we get
\begin{align*}
&x\ge x,x^2\ge x,\ld,x^{v-1}\ge x,x^v\ge x\Rightarrow\frac{1}{x^v}\le\frac{1}{x},\frac{1}{x^{v-1}}\le\frac{1}{x},\ld,
\frac{1}{x}\le\frac{1}{x}\Rightarrow \\
&\frac{|\al_0|}{x^v}\le\frac{|\al_0|}{x},\frac{|\al_1|}{x^{v-1}}\le\frac{|\al_1|}
{x^{v-1}},\ld,\frac{|\al_{v-1}|}{x}\le\frac{|\al_{v-1}|}{x}.
\end{align*}
Adding by pairs the previous inequalities we get:
\begin{align*}
&\frac{|\al_0|}{x^v}+\frac{|\al_1|}{x^{v-1}}+\cdots+\frac{|\al_{v-1}|}{x}\le
\frac{|\al_0|}{x}+\frac{|\al_1|}{x}+\cdots+\frac{\al_{v-1}|}{x}\Rightarrow \\
&\al_v+\frac{|\al_0|}{x^v}+\frac{\al_v|}{x^{v-1}}+\cdots+\frac{|\al_0-1|}{x}\le\al_v+
\frac{|\al_0|+|\al_1|+\cdots+|\al_{v-1}|}{x}. \hspace*{1.6cm} {\mbox{(12)}}
\end{align*}

Of course we get:
\begin{align*}
\al_0\le|\al_0|,\al_1\le|\al_1|,\ld,\al_{v-1}&\le|\al_{v-1}|\overset{x>0}{\Longrightarrow}
\frac{\al_0}{x^v}\le\frac{|\al_0|}{x^v},\frac{\al_1}{x^{v-1}}\\
&\le\frac{|\al_1|}{x^{v-1}}
,\ld,\frac{\al_{v-1}}{x}\le\frac{|\al_{v-1}|}{x}
\end{align*}
and adding by pairs the previous inequalities we get:
\begin{align*}
&\frac{\al_0}{x^v}+\frac{\al_1}{x^{v-1}}\cdots+\frac{\al_{v-1}}{x}\le\frac{|\al_0|}{x^v}
+\frac{|\al_1|}{x^{v-1}}+\cdots+\frac{|\al_{v-1}|}{x}\Rightarrow \\
&\al_v+\frac{\al_0}{x^v}+\frac{\al_1}{x^{v-1}}+\cdots+\frac{\al_{v-1}}{x}\le
\al_v+\frac{|\al_0|}{x^v}+\frac{|\al_1|}{x^{v-1}}+\cdots+\frac{|\al_{v-1}|}{x}.
 \hspace*{1.7cm} {\mbox{(13)}}
\end{align*}
Of course as we have seen in basic Lemma 3.8 if $p(\rho)<0$, then $p$ does not have any root in $[\rho,+\infty)$, and if $p(\rho)=0$, number $\rho$ is the unique real root of $p$ in $[\rho,+\infty)$. So far we have seen how we compute the unique real root of $p$ (if any) in $[\rho,+\infty)$, when $\rho$ is the greatest real root of $p'$.

In a similar way we compute the unique real root of $p$ in $(-\infty,\rho^\ast]$ (if any), where $\rho^\ast$ is the smallest real root of $p'$. It suffices to observe the following:

We simply write $p(x)=p(-(-x))$ and we find easily a polynomial $q\in\R[x]$, such that $p(x)=q(-x)$ (it is trivial to find such a polynomial $q$).

Now, for $x_0>1$, $x_0>\rho$ and $x_0>\dfrac{|\al_0|+|\al_1|+\cdots+|\al_{v-1}|}{|\al_v|}$ the previous inequalities (11), (12) and (13) hold simultaneously for $x=x_0$, thus we get
\begin{align*}
&\al_v+\frac{\al_{v-1}}{x_0}+\cdots+\frac{\al_1}{x^{v-1}_0}+\frac{\al_0}{a^v_0}<0\Rightarrow \\
&x^v_0\bigg(\al_v+\frac{\al_{v-1}}{x_0}+\cdots+\frac{\al_1}{x^{v-1}_0}+\frac{\al_0}{x^v_0}\bigg)
<0\Rightarrow p(x_0)<0 \ \ \text{(by the definition of $p$)}.
\end{align*}
So, we get $p(\rho)\cdot p(x_0)<0$ and $p$ has a root exactly in $(\rho,x_0)$, (say $x^\ast_0$), where $x^\ast_0$ is the unique real root of $p$ in $[\rho,+\infty)$.

We apply the bisection method in the interval $[\rho,x_0]$ and we compute the unique real root $x^\ast_0\in(\rho,x_0)$ of $p$ in $[\rho,+\infty)$.

Now we suppose that $p$ has real roots, and let $\rho$ be the smallest real root of $p$. Let $r$ be an arbitrary real root of $p$. Then $p(r)=0$ and by (14) we take that $q(-r)=0$, that is $-r$ is a real root of $q$. This gives that $-\rho$ is the biggest real root of $q$.

By (14) we also get that $p'(x)=-q'(-x)$, which gives that if $\al$ is a real root of $p'$, then $-\al$ is a real root of $q'$. Thus, because by supposition $p'$ has real roots, the same holds for $q'$. Then, we apply the previous procedure and we compute the greatest real root of $q$, (say $-\rho$), which means that $\rho$ is the smallest real root of $p$. Thus, we compute the smallest real root of $p$ also in any case. Now, we consider a polynomial $p\in\R[x]$, with $deg p(x)=v\in\N$, $v\ge3$, so that polynomial $p'$ does not have any real root. Because of $p'$ does not have any root and $deg p'(x)\ge2$ (because $deg p(x)\ge3$), we conclude that $p'$ is a polynomial of even degree (because any polynomial of odd degree has a real root at least). This means that $p$ is a polynomial of odd degree that has one real root at least. Because $p'(x)\neq0$ for every $x\in\R$, we take it that $p'(x)>0$ for every $x\in\R$ or $p'(x)<0$ for every $x\in\R$. If $\al_v>0$, then $p'(x)>0$ for every $x\in\R$ and $p$ is a strictly increasing function in $\R$, such that $\lim\limits_{x\ra+\infty}p(x)=+\infty$ and $\lim\limits_{x\ra-\infty}p(x)=-\infty$. Thus, there exists $x_0<0$ so that $p(x_0)<0$ and $y_0>0$ so that $p(y_0)>0$, which gives that $p$ has a real root in $(x_0,y_0)$, say $\rho$. Because $p$ is a strictly monotonous function, we take it that the root $\rho$ is the unique real root of $p$.

We can now compute some numbers $x_0,y_0$  with the above properties. By inequality $(\ast)$ in page 40 we take it that if $x\in\R$, so that $x>1$ and $x>\dfrac{|\al_0|+|\al_1|+\cdots+|\al_{v-1}|}{\al_v}$, then we get $\al_v+\dfrac{\al_v-1}{x}+\cdots+\dfrac{\al_1}{x^{v-1}}+\dfrac{\al_0}{x^v}>0$. We choose some $y_0>1$, so that $y_0>\dfrac{|\al_0|+|\al_1|+\cdots+|\al_v-1|}{\al_v}$, then by inequality $(\ast)$ in page 40 we get
\begin{align*}
&\al_v+\frac{\al_v-1}{y_0}+\cdots+\frac{\al_1}{y^{v-1}_0}+\frac{\al_0}{y^v_0}>0\Rightarrow\\
&y^v_0\bigg(\al_v+\frac{\al_v-1}{y_0}+\cdots+\frac{\al_1}{y^{v-1}_0}+\cdots+\frac{\al_0}{y^v_0}\bigg)>0\Leftrightarrow p(y_0)>0. \hspace*{2.5cm} {\mbox{(15)}}
\end{align*}
Now, let some $x<-1$. Then we have $(x\neq0)$
\[
\frac{\al_0}{x^v}\ge-\bigg|\frac{\al_0}{x^v}\bigg|,\ \ \frac{\al_1}{x^{v-1}}\ge-\bigg|\frac{\al_1}{x^{v-1}}\bigg|,\ld,\frac{\al_{v-1}}{x}\ge-\bigg|
\frac{\al_{v-1}}{x}\bigg|.
\]
Adding these inequalities we get:
\[
\frac{\al_0}{x^v}+\frac{\al_1}{x^{v-1}}+\cdots+\frac{\al_v-1}{x}\ge-
\bigg(\bigg|\frac{\al_0}{x^v}\bigg|+\bigg|\frac{\al_1}{x^{v-1}}\bigg|+\cdots+\bigg|
\frac{\al_{v-1}}{x}\bigg|\bigg)  \eqno{\mbox{(16)}}
\]
We get also
\begin{align*}
|x|>1&\Rightarrow|x|\ge|x|,|x^2|>|x|,\ld,|x^{v-1}|>|x|,|x^v|>|x|\\
&\Rightarrow\frac{1}{|x|}\le\frac{1}{|x|},\ld,\frac{1}{|x^{v-1}|}<
\frac{1}{|x|},\frac{1}{|x^v|}<\frac{1}{|x|} \\
&\Rightarrow\bigg|\frac{\al_{v-1}}{x}\bigg|\le\bigg|\frac{\al_{v-1}|}{|x|},\ld,
\bigg|\frac{\al_1}{x^{v-1}}\bigg|<\frac{|\al_1|}{|x|},\bigg|\frac{\al_0}{x^v}\bigg|<\bigg|\frac{\al_0}{x}\bigg|\\
&\Rightarrow-\bigg|\frac{\al_{v-1}}{x}\bigg|\ge-\bigg|\frac{\al_{v-1}}{x}\bigg|,\ld,-
\bigg|\frac{\al_1}{x^{v-1}}\bigg|>-\frac{|\al_1|}{|x|},-\bigg|\frac{\al_0}{x^v}\bigg|>-
\bigg|\frac{\al_0}{x}\bigg|.
\end{align*}
Adding by pairs the previous inequalities we get:
\[
-\bigg(\bigg|\frac{\al_0}{x^v}\bigg|+\bigg|\frac{\al_1}{x^{v-1}}\bigg|+\cdots+
\bigg|\frac{\al_{v-1}}{x}\bigg|\bigg)\ge-\frac{|\al_0|+|\al_1|+\cdots+|\al_{v-1}|}{|x|}.
\eqno{\mbox{(17)}}
\]
By (16) and (17) we get
\[
\al_v+\frac{\al_{v-1}}{x}+\cdots+\frac{\al_v}{x^{v-1}}+\frac{\al_0}{x^v}\ge\al_v+
\frac{|\al_0|+|\al_1|+\cdots+|\al_{v-1}|}{|x|}  \eqno{\mbox{(18)}}
\]
for every $x\in\R$, $x<-1$.

Now we get $x_0\in\R$, so that
\[
x_0<-1 \ \ \text{and} \ \ x_0<-\frac{|\al_0|+|\al_1|+\cdots+|\al_{v-1}|}{\al_v}.  \eqno{\mbox{(19)}}
\]
Then by (19) we get:
\begin{align*}
&-x_0>\frac{|\al_0|+|\al_1|+\cdots+|\al_{v-1}|}{\al_v}>0 \ \ \text{(because $\al_v>0$)} \\
&\Rightarrow|x_0|>\frac{|\al_0|+|\al_1|+\cdots+|\al_{v-1}|}{\al_v} \\
&\Rightarrow\al_v-\frac{|\al_0|+|\al_1|+\cdots+|\al_{v-1}|}{|x_0|}>0. \hspace*{5.5cm} {\mbox{(20)}}
\end{align*}
So, for $x<-1$, $x_0<-\dfrac{|\al_0|+|\al_1|+\cdots+|\al_{v-1}|}{\al_v}$ we get from (20), that
\[
\al_v+\frac{\al_{v-1}}{x_0}+\cdots+\frac{\al_1}{x^{v-1}_0}+\frac{\al_0}{x^v_0}>0\Rightarrow
\]
(because $x_0<0$ and $v$ is odd $x^v_0<0$)
\[
x^v_0\cdots\bigg(\al_v+\frac{\al_{v-1}}{x_0}+\cdots+\frac{\al_1}{x^{v-1}_0}+
\frac{\al_0}{x^v_0}\bigg)<0\Leftrightarrow p(x_0)<0.
\]
Thus, for some $x_0<-1$, $x_0<-\dfrac{|\al_0|+|\al_1|+\cdots+|\al_{v-1}|}{\al_v}$ we get $p(x_0)<0$ (21).

So with (15) and (12) we get $p(x_0)\cdot p(y_0)<0$ and by applying the bisection method we can compute the unique real root $\rho$ of $p$, so that $\rho\in(x_0,y_0)$.

Finally in the case of $\al_v<0$, we consider the polynomial $-p(x)$. Then
\[
(-p)'(x)=-p'(x)\neq0\ \  \text{for every} \ \  x\in\R
\]
and the coefficient of the monomial of greater degree of $-p$ is positive now. We apply the previous for $-p$ and we compute the unique real root $\rho$ of $-p$, that is the unique real root of $p$ also.

So in this remark we have covered the gap, we have left from basic Lemma 3.8 and Corollaries 3.9 and 3.10 and we have computed specific numbers $x_0,y_0$. For the sequel we also need some tools from real polynomials of two real variables.

Before this, let us give some specific examples for the roots of polynomials.

As it is well known, from elementary calculus, any polynomial of odd degree has a real root at least.

On the other hand, there are many polynomials of any even degree that do not have any real root. For example, let $p(x)$  be any polynomial that is non constant, let $k\in\N$, and $\thi>0$. Then polynomial $q(x)=p(x)^{2k}+\thi$ does not have any real root, as we can easily see, and has degree $2k\cdot v$, where $v=deg p(x)$.

Of course for every finite set of real numbers $A=\{\rho_1,\rho_2,\ld,\rho_v\}$, $v\in\N$, $\rho_i\neq\rho_j$, for $i,j\in\{1,2,\ld,v\}$, $i\neq j$, polynomial $p(x)=(x-\rho_1)(x-\rho_2)\ld(x-\rho_v)$ has roots the numbers $\rho_i$, $i=1,\ld,v$, and polynomial
\[
q(x)=((x-\rho_1)(x-\rho_2)\ld(x-\rho_v))^{2k}=p(x)^{2k}
\]
is a polynomial of even degree $deg q(x)=2kv$, with roots the numbers $\rho_i$, $i=1,2,\ld,v$, also. Now we consider polynomials of two real variables with real coefficients, that is we consider the set
\[
\R^2[x,y]=\{p(x,y):p(x,y)
\]
is a polynomial of two real variables $x$ and $y$ with coefficients in $\R\}$.\smallskip

Let $p(x,y)\in\R^2[x,y]$. We say that polynomial $p(x,y)$ is a pure polynomial, when $deg_x p(x,y)\ge1$ and $deg_yp(x,y)\ge1$ where $deg_xp(x,y)$, $deg_yp(x,y)$ are the greatest degree of its monomials with respect to $x$ (or $y$ respectively).

The set of roots of $p(x,y)$ is the set
\[
L_p(x,y)=\{(x,y)\in\R^2\mid p(x,y)=0\}.
\]
As in polynomials of one real variable, we can easily see that there are many pure polynomials $p(x,y)\in\R^2[x,y]$, that do not have any roots.

For example, let $p(x,y)$ be any pure polynomial. Then polynomial\linebreak $q(x,y)=p(x,y)^{2k}\!+\!\thi$, where $k\!\in\!\N$, $\thi\!>\!0$, is a pure polynomial that does not have any real root, as we easily see. Of course these polynomials are of even degree for $x$ and $y$.

On the other hand let $A=\{\al_1,\al_2,\ld,\al_v\}$, $B=\{\bi_1,\bi_2,\ld,\bi_m\}$, $A\cup B\subseteq\R$, $v,m\in\N$, $\al_i\neq\al_j$ for every $i,j\in\{1,\ld,v\}$ $i\neq j$ and $\bi_i\neq\bi_j$ for every $i,j\in\{1,2,\ld,m\}$, $i\neq j$. Let Also $k\in\N$. We consider the pure polynomial
\[
p(x,y)=((x-\al_1)(x-\al_2)\ld(x-\al_v))((y-\bi_1)(y-\bi_2)\ld(y-\bi_m))^{2k}. \]
Then $p$ is a pure polynomial of even degree with respect to $x$ and $y$ such that
\[
L_0=\{(\al_i,\bi_j),i\in\{1,\ld,v\},j\in\{1,2,\ld,m\}\}\subsetneqq L_p(x,y).
\]
We remark also that for ever $y\in\R$, the couple $(\al_i,y)$ is a root of $p(x,y)$. This fact differentiates pure polynomials $p(x,y)$ from polynomials of one variable.

That is, there exist uncountable pure polynomials, each one having uncountable set of real roots. Especially, this holds for pure polynomials of an odd degree with respect to $x$ and $y$. We have the following proposition.
\end{proof}
\noindent
{\bf Proposition 3.13.} {\em
Let $p(x,y)$ be a pure polynomial such that $deg_x(x,y)=v$ is odd or $deg p_y(x,y)$ is odd. Then for every $r\in\R$ the set $L_r=\{(x,y)\in\R^2:p(x,y)=r\}$ is uncountable}.
\begin{proof}
We suppose, without loss of generality, that number $v=deg p_x(x,y)$ is odd. Then, as we can see easily, we can write polynomial $p(x,y)$ as follows:
\[
p(x,y)=\al_v(y)x^v+\al_{v-1}(y)x^{v-1}+\cdots+\al_1(y)x+\al_0(y),
\]
where $\al_i(y)\in\R[y]$ for every $i=0,1,\ld,v$, and $\al_v(y)\neq0$, because $v=deg p_x(x,y)$. Because $\al_v(y)\neq0$, polynomial $\al_v(y)$ has a finite set of roots. Let $A_v$ be the set of roots of $\al_v(y)$, that is $A_v=\{y\in\R\mid\al_v(y)=0\}$. Let $y_0\in\R\sm A_v$. Then $\al_v(y_0)\neq0$. Also let $r\in\R$. We consider the polynomial
\[
p_r(x)=\al_v(y_0)x^v+\al_{v-1}(y_0)x^{v-1}+\cdots+\al_1(y_0)x+\al_0(y_0)-r.
\]
Then, $p_r(x)$ is a polynomial of odd degree $deg p_r(x)=v$, thus polynomial $p_r(x)$ has a real root, say $x_0$, at least, that is we have:
\[
\begin{array}{l}
  p_r(x_0)=0\Rightarrow\al_v(y_0)x^v_0+\al_{v-1}(y_0)x^{v-1}_0+\cdots+
\al_1(y_0)x_0+\al_0(y_0)=r\Leftrightarrow  \\ [1.5ex]
p(x_0,y_0)=r\Rightarrow(x_0,y_0)\in L_r.
\end{array}
\]
That is we proved that for every $y\in\R\sm A_v$, we have that there exists some $x\in\R$, such that $(x,y)\in L_r$. Of course, if $y_1,y_2\in\R\sm A_v$, $y_1\neq y_2$ and $(x_1,y_1),(x_2,y_2)\in L_r$, we have $(x_1,y_1)\neq(x_2,y_2)$ so the set $L_r$ is uncountable, and the proof of this proposition is complete. \qb
\end{proof}
\noindent
{\bf Corollary 3.14.} {\em
Let $p(x,y)$ be a pure polynomial such that $deg_xp(x,y)$ or $deg_yp(x,y)$ is odd. Then the set of real roots of $p(x,y)$ is uncountable}.
\begin{proof}
It is a simple application of the previous Proposition 3.13 for $r=0$.  \qb
\end{proof}

As we have noticed previously there are also such pure polynomials that the numbers $deg_xp(x,y)$ and $deg_yp(x,y)$ are even whose set of real roots is uncountable, as well as there being polynomials that do not have any roots.

Of course here we have the natural question: Are there pure polynomials $p(x,y)$ whose set of real roots $p(x,y)$ is non empty and finite? Of course, let us give a simple example:

We consider the polynomial: $p(x,y)=(x^2-1)^2+(y^2-4)^2$. It is easy to see that
\[
L_p(x,y)=\{(1,2),(1,-2),(-1,2),(-1,-2)\}.
\]
More generally, let $p_1(x)$ be a polynomial with real roots $\al_1,\al_2,\ld,\al_v$, and $p_2(y)$ be a polynomial with real roots $\bi_1,\bi_2,\ld,\bi_m$.

We consider the pure polynomial $p(x,y)=p_1(x)^{2k_1}+p_2(y)^{2k_2}$, where $k_1,k_2,\in\N$. Then, it is easy to see that:
\[
L_p(x,y)=\{(\al_i,\bi_j),i\in\{1,\ld,v\},j\in\{1,\ld,m\}\}.
\]
Of course by Corollary 3.14 only pure polynomials whose numbers $deg_xp(x,y)$, $deg_yp(x,y)$ are even can have finite set of roots, as in the previous examples. From the previous results we also  have a significant observation.

These polynomials have the number zero as a global minimum!

For the sequel, we have to concentrate our attention to pure polynomials $p(x,y)$ that have a finite set of roots. So, from the previous observation we are led to ask whether the reverse result holds. That is, does any pure polynomial that has a global minimum have a finite set of real roots? The answer is no, and we can give a simple example. We consider the pure polynomial $p(x,y)=((x-1)(y-2))^2-7$. It is easy to check that polynomial $p(x,y)$ has the number $-7$ as a global minimum. For this polynomial we get:
\[
p(x,3)=(x-1)^2-7 \ \ \text{for every} \ \ x\in\R
\]
so $p(1,3)=-7<0$ and $p(4,1)=2>0$, thus there exists $x_0\in(1,4)$ so that $p(x_0,3)=0$. Similarly take any real number $y_0\in(3,4)$. That is $3<y_0<4\Rightarrow1<y_0-2<2\Rightarrow1<(y_0-2)^2<4$ (1).

We get:
\[
\begin{array}{l}
  p(1,y_0)=-7<0 \\ [1.5ex]
  p(8,y_0)=7^2(y_0-2)^2>7^2>0
\end{array}
\]
by (1), so there exists $x_1\in(1,8)$ such that $p(x_1,y_0)=0$.

Thus, for every $y\in(3,4)$, there exists $x\in\R$, so that $p(x,y)=0$, and of course if $y_1,y_2\in(3,4)$, $x_1,x_2\in\R$, $p(x_1,y_1)=p(x_2,y_2)=0$ and $y_1\neq y_2$ we have $(x_1,y_1)\neq(x_2,y_2)$, so the set of real roots of $p$ is uncountable even if polynomial $p(x,y)$ has a global minimum. However, the property of a pure polynomial to have a global minimum (or maximum also) is a crucial property that have all pure polynomials that have a finite number of roots, as we will prove now with the following proposition.\medskip\\
\noindent
{\bf Proposition 3.15.} {\em
(topological lemma). Let $p(x,y)$ be a pure polynomial. We suppose that there exist two couples $(x_1,y_1),(x_2,y_2)\in\R^2$ such that\linebreak $p(x_1,y_1)\cdot p(x_2,y_2)<0$. Then, set $L_p(x,y)$ is uncountable}.
\begin{proof}
We set $A=(x_1,y_1)$, $B=(x_2,y_2)$. We get $A\neq B$, or otherwise we have $A=B$ and $p(x_1,y_1)\cdot p(x_2,y_2)=p(A)\cdot p(B)=p(A)^2\ge0$, which is false. So we get $A\neq B$. We consider the midperpendicular $\el$ of segment $[A,B]$. For every point $\Ga\in\el$, we consider the union of two segments $[A,\Ga]\cup[\Ga,B]$. We write $A\Ga B=[A,\Ga]\cup[\Ga,B]$ for simplicity. Of
course $A\Ga B\subseteq\R^2$. We consider the restriction $p|_{A\Ga B}$ for simplicity, and we write $p=p|_{A\Ga B}$ also for simplicity.

Of course the set $A\Ga B$ is a compact and connected subset of $\R^2$. So the set $p(A\Ga B)$ os a closed interval of $\R$.

We suppose that $p(A)<0$ and $p(B)>0$, without loss of generality. So $p(A),p(B)\in p(A\Ga B)$ and gives that $0\in p(A\Ga B)$, that is there exists some point $\De\in A\Ga B$ so that $p(\De)=0$. Of course $\De\neq A$ and $\De\neq B$. So, for every $\Ga\in\el$ and every curve $A\Ga B$, there exists some $\De\in A\Ga B$, $\De\neq A$, $\De\neq B$, such that $p(\De)=0$.

Because the set $\ca=\{A\Ga B,\Ga\in\el\}$ is an uncountable supset of $P(\R^2)$ (the powerset of $\R^2$), and for every $\Ga_1,\Ga_2\in\el,\Ga_1\neq\Ga_2$, we have that $A\Ga_1B\cap A\Ga_2B=\{A,B\}$ this means that the set
\[
\cb=\{\De\in A\Ga B\mid\Ga\in\el \ \ \text{and} \ \ p(\De)=0\}
\]
is uncountable, that gives that the set $Lp(x,y)$ of roots of $p(x,y)$ is uncountable and the proof of this proposition is complete. \qb
\end{proof}
\noindent
{\bf Corollary 3.16.} {\em
Let $p(x,y)$ be a pure polynomial that has a finite set of roots, non empty. Then, number 0 is the global minimum or maximum of $p(x,y)$, or in other words polynomial $p(x,y)$ has a global maximum or minimum, and when this holds, then this global maximum or minimum is number 0}.
\begin{proof}
There exists no two points $(x_1,y_1),(x_2,y_2)\in\R^2$ so that:
\[
p(x_1,y_1)\cdot p(x_2,y_2)<0.
\]
Or else, if there exist two points $(x_1,y_1),(x_2,y_2)\!\in\!\R^2$, so that $p(x_1,y_1)\!\cdot\! p(x_2,y_2)<0$, then set $L_p(x,y)$ is uncountable (by the previous Proposition 3.13), which is false by our supposition.

This means that we have:

i) $p(x,y)\ge0$ for every $(x,y)\in\R^2$, or

ii) $p(x,y)\le0$ for every $(x,y)\in\R^2$.

We suppose that i) holds. Because set $Lp(x,y)$ is non-empty, this means that there exists $(x_0,y_0)\in\R^2$ so that $p(x_0,y_0)=0$, so we get $p(x,y)\ge p(x_0,y_0)$ for every $(x,y)\in\R^2$. So, polynomial $p(x,y)$ has in point $(x_0,y_0)$ its global minimum the number 0, because $p(x_0,y_0)=0$. If ii) holds, then we take with a similar way that $p$ has  global maximum the number 0 in a point, and the proof of corollary is complete. \qb
\end{proof}

The above corollary is a basic result that we use in the second stage of our method.

Finally, we refer here the most advanced result, that we use in our method.

This result is called many times as Fermat's Theorem in calculus of several variables.\medskip \\
\noindent
{\bf Theorem 3.17.} {\em
Let $U\subseteq\R^2$, $U$ open and $f:U\ra\R$ be a differentiable function in $x_0\in U$, where $x_0$ is a point of local maximum or local minimum of $f$. Then the following holds: $\bigtriangledown f(x_0)=0$, that is $x_0$ is a crucial point of $f$, where $\nabla f(x_0)$ is the gradient of $f$ in $x_0$}.
\section{Appendix}
\noindent

Fundamental Theorem of Algebra is a powerful and basic result in the theory of polynomials, especially in polynomial equations.

Gauss gave the first complete proof of this result in his Ph.D. There are many proofs for this important Theorem but none of them is trivial in order to be presented in books of secondary school.

Its simplest proof comes from complex analysis and uses an advanced Theorem of complex analysis, Liouville's Theorem. Here we give a proof that uses the most elementary tools that an undergraduate student learns.

We think that it is difficult for an undergraduate student to find this proof in books, so we try to present it with details for educational reasons.

For this reason we give firstly some elementary lemmas.\medskip \\
\noindent
{\bf Lemma 4.1} {\em
Let $p(z)\in\C[z]$ be a complex polynomial, and $z_0\in\C$. We consider polynomial $Q(z)=p(r+z_0)$, $z\in\C$. If $p(z)\equiv0$, then of course $Q(z)\equiv0$. If $p(z)\not\equiv0$, then $deg Q(z)=deg p(z)$}.
\begin{proof}
If $deg p(z)=0$, then the result is obvious. Let $deg p(z)=n\in\N$, $n\ge1$. We suppose that $n=1$, so we get $p(z)=az+b$, where $a,b\in\C$, $a\neq0$. We get: $Q(z)=p(z+z_0)=a(r+z_0)+b=az+(az_0+b)$, and $deg Q(z)=1$, because $a\neq0$. So, the result holds for $n=1$. We prove the result inductively. We suppose $z_0\neq0$.

For $n=1$, the result holds.

We suppose that result holds for $k\in\N$, $k\ge1$ and for every $j\in\N$, $1\le j\le k$. We prove that result holds for $k+1$.

We suppose that
\[
p(z)=a_0+a_1z+\cdots+a_kz+a_{k+1}z^{k+1} \ \ \text{and} \ \ a_{k+1}\neq0, \ \ \text{so} \ \ deg p(z)=k+1.
\]
We distinguish two cases:

(i) $q(z)=a_0+a_1z+\cdots+a_kz^k\not\equiv0$. \\
Then we have: $p(z)=q(z)+a_{k+1}z^{k+1}$.

We have
\[
Q(z)=p(z+z_0)=q(z+z_0)+a_{k+1}(z+z_0)^{k+1}.  \eqno{\mbox{(1)}}
\]
We set $r(z)=q(z+z_0)$, $z\in\C$. Because $q(z)\not\equiv0$, by induction step we have $deg r(z)=deg q(z)\le k$ (2).

We have by Newton's binomial
\[
a_{k+1}(z+z_0)^{k+1}=a_{k+1}\sum^{k+1}_{j=0}z^{k+1-j}z^j_0
=\sum^{k+1}_{j=0}a_{n+1}z^j_0z^{k+1-j}.  \eqno{\mbox{(3)}}
\]
Because $z_0\neq0$ (by our supposition) and $a_{k+1}\neq0$ we have $deg a_{k+1}(z+z_0)^{k+1}=k+1$ (4), by equality (3).

By (1), (2) and (4), we get $deg Q(z)=k+1$ and the result holds.

(ii) $q(z)=a_0+a_1z+\cdots+a_kz^k\equiv0$. The proof is similar to case (i), so the result holds by induction.

Of course if $z_0=0$ the result is obvious, because $Q(z)=p(z)$, so $deg Q(z)=deg p(z)$.
\end{proof}
\noindent
{\bf Lemma 4.2.} {\em
We consider polynomial $p(z)\in\C[z]$. Of course we have $|p(z)|\ge0$ for every $z\in\C$. So the set
\[
A=\{x\in\R\mid\exists\;z\in\C:x=|p(z)|\}
\]
is low bounded by 0.

We set $m=\inf(A)$. Then there exist $R>0$ so that:
\[
m=\inf(\{x\in\R\mid\exists z\in\overline{D(0,R)}:x=|p(z)|\})
\]
where $\overline{D(0,R)}=\{z\in\C:|z|\le R\}$}.
\begin{proof}
We set $B_R=\{x\in\R\mid\exists z\in D(0,R):x=|p(z)|\}$ for some $R>0$.

The result is obvious when $p(z)\equiv0$, so we suppose that $p(z)\not\equiv0$. It is obvious that $B_R\subseteq A$ by definitions of sets $A$ and $B_R$ for $R>0$. Let $x\in B_R$ for some $R>0$. Then $x\in A$. So: $m\le x$, because $m$ is a lower bound of $A$. So we have: $m\le x$ for every $x\in B_R$. This means that $m$ is a lower bound of $B_R$, so: $m\le m^\ast_R$ (1), where $m^\ast_R=\inf(B_R)$. That is we get: $m\le m^\ast_R$ for every $R>0$.

We suppose that:\\
$p(z)=a_0+a_1z+\cdots+a_nz^n$, where $n\in\N\cup\{0\}$, $a_n\neq0$. When $n=0$, we get of course $m^\ast_R=m=|p(z)|$ for every $z\in\C$ and every $R>0$, and the result is obvious of course. So we suppose that $n\ge1$.

Then for every $z\in\C\sm\{0\}$ we get
\[
p(z)=z^n\cdot\bigg(\frac{a_0}{z^n}+\frac{a_1}{z^{n-1}}+\cdots+\frac{a_{n-1}}{z}+a_n\bigg).
\]
By calculus of the elementary limits in complex analysis we have:\\
$\lim\limits_{z\ra\infty}\dfrac{a_0}{z^n}=\lim\limits_{z\ra\infty}\dfrac{a_1}{z^{n-1}}=
\cdots=\lim\limits_{z\ra\infty}\dfrac{a_{n-1}}{z}=0$ and $\lim\limits_{z\ra\infty}z^n=\infty$, so we have: $(a_n\neq0)$ $\lim\limits_{z\ra\infty}p(z)=\infty$. By definition of $\lim\limits_{z\ra\infty}p(z)$, this means that: for $m+1$, there exists $R_0>0$, so that: $|p(z)|>m+1$ for every $z\in\C$, $|z|>R_0^{(\ast)}$.

From (1) we have of course $m\le m^\ast_{R_0}$ (2). Take $w\in\C$: $|w|>R_0$. Then, by the above we have $|p(w)|>m+1$ (3).

Now there exists $z_1\in\C$ so that $|p(z_1)|<m+1$ (4), or otherwise we have\linebreak $|p(z)|\ge m+1$ for every $z\in\C$, so $m+1$ is a lower bound of $A$, that is\linebreak $m=\inf(A)\ge m+1$, which is false. Of course $z_1\in\overline{D(0,R_0)}$ by implication $(\ast)$, or else $|z_1|>R_0$ that means $|p(z_1)|>m+1$ (5) that is false by the above inequalities (4) and (5). So we have $m^\ast_{R_0}\le|p(z_1)|<m+1<|p(w)|\Rightarrow m^\ast_{R_0}\le|p(w)|$.

So we get: $m^\ast_{R_0}\le|p(z)|$ for every $z\in\C:|z|>R_0$. Of course we have also\linebreak $m^\ast_{R_0}\le|p(z)|$ for every $z\in\overline{D(0,R_0)}$ by definition of $m^\ast_{R_0}$. So we get $m^\ast_{R_0}\le|p(z)|$ for every $z\in\C$, that means that $m^\ast_{R_0}$ is a lower bound of $A$, that is $m^\ast_{R_0}\le m$ (6). From (2) and (6) we get $m=m^\ast_{R_0}$, that is Lemma 4.2 has been proven. \qb
\end{proof}
\noindent
{\bf Remark 4.3.} {\em
{\bf De Moivre Theorem:} We remind here the following result.\\
Let $n\in\N$, $n\ge2$. Then every non-zero complex number has exactly $n$ roots, that is if $w\in\C$, $w\neq0$, then equation $z^n=w$ has exactly $n$ solutions: This result is proven easily by elementary properties of complex numbers and it is well known as De Moivre's Theorem, using properties of functions sine and cosine. We also need a topological Theorem.} \medskip \\
\noindent
{\bf Theorem 4.4} {\em
Let $K\subseteq\C$ be compact and $f:K\ra\R$ be continuous. Then $f$ attains its supremum and its infimum and both are finite. For this theorem see \cite{5}. After the above, we are now ready to give the proof of fundamental Theorem of Algebra}. \medskip\\
\noindent
{\bf Fundamental Theorem of Algebra 4.5}
\begin{proof}
We consider polynomial
\[
p(z)=a_0+a_1z+\cdots+a_nz^n, \ \ a_i\in\C \ \ \text{for every} \ \ i=0,1,\ld,n, \ \ n\in\N, \ \ a_n\neq0, \ \ n\ge1.
\]
We prove that $p(z)$ has a root that is there exists $z_0\in\C$, so that $p(z_0)=0$. First of all we examine the case of $a_n=1$.

Of course we have $|p(z)|\ge0$ for every $z\in\C$. We set
\[
A=\{x\in\R\mid\exists z\in\C:x=|p(z)|\}.
\]
Set $A$ is low bounded by 0. We set $m=\inf(A)$. Of course $m\ge0$. For every $R>0$ we set:
\[
B_R=\{x\in\R\mid\exists z\in\overline{D(0,R)}:x=|p(z)|\}, \ \ \text{and}
\]
\[
m^\ast_R=\inf(B_R),\overline{D(0,R)}=\{z\in\C:|z|\le R\}.
\]
Applying Lemma 4.2 we take that there exists $R_0>0$ so that: $m=m^\ast_{R_0}$ (1).

By Theorem 4.3, page 233 [5], Ball $\overline{D(0,R_o)}=\{z\in\C:|z|\le R_0\}$ is a compact set as a set closed and bounded. Polynomial $p$ is a continuous function in $\C$. This is a well known result in elementary Complex analysis.Usual norm $|\;|:\C\ra\R$ is a continuous function also in $\C$, by
elementary complex analysis. So, the composition function $F:\C\ra\R$, $F=|\cdot|\circ p$, where $p:\C\ra\C$, $|\cdot|:\C\ra\R$ with formula\linebreak $F(z)=(|\cdot|\circ p)(z)=|p(z)|$ for every $z\in\C$ is a continuous functions as the composition of continuous functions $|\cdot|$ and $p$. Applying now Theorem 4.4 for $K=\overline{D(0,R_0)}$ and $f=F$ we take it that function $F$ attains its infimum is some point $z_0\in\overline{D(0,R_0)}$. This means that $|p(z_0)|=m^\ast_{R_0}$ (2). By (1) and (2) we have $m=|p(z_0)|$ (3). We argue that $m=0$. To take a contradiction we suppose that $m>0$. Because $|p(z_0)|=m>0$, we see that $p(z_0)\neq0$.

We consider polynomial $Q(z)=\dfrac{p(z+z_0)}{p(z_0)}$, that is defined well because $p(z_0)\neq0$.

Applying Lemma 4.1 we see that $deg p(z+z_0)=deg p(z)=n$, and by definition of $Q(z)$, we get: $deg Q(z)=n$. We have $Q(0)=\dfrac{p(0+z_0)}{p(z_0)}=1$, so polynomial $Q(z)$ has constant term equal to 1.

Let
\[
Q(z)=1+c_kz^k+\cdots+c_nz^n, \ \ c_n\neq0, \ \ \text{for every} \ \ z\in\C, \ \ \text{where} \ \ k\in\N, \ \ 1\le k\le n
\]
and $k$ be the smallest natural number such that $c_k\neq0$, (maybe $k=n$ of course).

So, we get: $-|c_k|/c_k\neq0$. From Remark 4.3 there exists $j\in\C$, so that\linebreak$j^k=-|c_k|/c_k$ (4). (Of course there are $k$ different complex numbers such that (4) holds). By (4) we take $|j^k|=|-|c_k|/c_k|=1\Rightarrow|j|=1$ (5).

By choice of $j$ we have: for $r\in\C$ $|1+c_kr^kj^k|\overset{(4)}{=}|1+c_kr^k\cdot(-|c_k|/c_k)=1-|c_k|r^k$ (6).

By definition of $Q(z)$ we compute for $z=rj$ for $r\in\C$:
\begin{align*}
Q(z)&=Q(rj)=1+c_k(rj)^k+\cdots+c_n(rj)^n\\
&=1+c_kr^kj^k+c_{k+1}r^{k+1}j^{k+1}+\cdots+
c_nr^nj^n. \hspace*{4.4cm} (7)
\end{align*}
By (7) and triangle inequality we get:
\[
|Q(rj)|\le|1+c_kr^kj^k|+|c_{k+1}r^{k+1}j^{k+1}|+\cdots+|c_nr^nj^n|.  \eqno{\mbox{(8)}}
\]
Applying (6) we get by (8)
\begin{align*}
|Q(rj)|&\le1-|c_k||r^k|+|c_{k+1}||r|^{k+1}+\cdots+|c_n||r^n| \\
&=1-|r^k|(|c_k|-|c_{k+1}||r|-\cdots-|c_n||r|^{n-k}), \ \ \text{for every} \ \ r\in\C\hspace*{1.3cm} (10)
\end{align*}
By definition of $m$ we get:
\[
m\le|p(z+z_0)| \ \ \text{for every} \ \ z\in\C. \eqno{\mbox{(11)}}
\]
By (3) and (11) we get:
\begin{align*}
|p(z_0)|\le|p(z+z_0)| \ \ &\text{for every} \ \ z\in\C \\
\Rightarrow \bigg|\frac{p(z+z_0)}{p(z_0)}\bigg|\ge1 \ \ &\text{for every} \ \ z\in\C\\
\Rightarrow |Q(z)|\ge1 \ \ &\text{for every} \ \ z\in\C \ \ \text{(by definition of $Q$)}  \hspace*{3cm} (12)
\end{align*}
Now, we distinguish two cases:

(i) $k=n$. Then, from (10) we get:
\[
|Q(rj)|\le1-|r|^k|c_k|,  \eqno{\mbox{(11)}}
\]
So, for every $r\neq0$, we get by (12)
\[
|Q(rj)|\ge 1  \ \ \text{and} \eqno{\mbox{(13)}}
\]
\[
|Q(rj)|\le1-|r^k||c_k|<1  \eqno{\mbox{(14)}}
\]
and we take a contradiction from (13) and (14).

(ii) $k<n$.

By properties of complex limits we get:
\[
\lim_{r\ra0}(|c_k|-|c_{k+1}||r|-\cdots-|c_n|r|^{n-k})=|c_k|>0.
\]
This limit shows us that: there exists some small $r_0$ so that
\[
|c_k|-|c_{k+1}||r_0|-\cdots-|c_n||r_0|^{n-k}>0.  \eqno{\mbox{(15)}}
\]
We set $\thi:=|c_k|-|c_{k+1}||r_0|-\cdots-|c_n||r_0|^{n-k}$. So we have $\thi_0>0$. From (10) and (15) we get:
\[
|Q(r_0j)|\le1-|r_0|^k\thi_0<1. \eqno{\mbox{(16)}}
\]
From (12) we get: $|Q(r_0j)|\ge1$ (17). By (16) and (17) we get a contradiction. So, our supposition that $m>0$ is false. So we have $m=0$ and from (3) we get $0=p(z_0)$, that is polynomial $p$ has, as a root number $z_0$. If $a_n\neq1$ we write: \\ $\dfrac{1}{a_n}p(z)=\dfrac{a_0}{a_n}+\dfrac{a_1}{a_n}z+\cdots+z^n$, and applying the previous result we take it that there exists some $w\in\C$ such that $\dfrac{1}{a_n}p(w)=0\Leftrightarrow p(w)=0$, so polynomial $p$ has a root again. The proof of fundamental Theorem has completed now. \qb
\end{proof}
\noindent
{\bf Remark 4.6.} {\em
Inside our work we have used the well known binomial equation\linebreak $x^n=a$, where $a>0$. We remind how we solve this equation here, for $n\ge2$, $n\in\N$. We will distinguish two cases:\medskip

(i) $a>1$. We consider function $f:[1,a]\ra\R$, with the formula $f(x)=x^n-a$ for every $x\in[1,a]$. We get $f(1)=1^n-a<1$, from our supposition and\\
$f(a)=a^n-a=a(a^{n-1}-1)>0$. So we have $f(1)\cdot f(a)<0$ and because $f$ is continuous we understand from Bolzano Theorem that there exists  $x_0\in(1,a)$ so that: $f(x_0)=0\Leftrightarrow x^n_0-a=0\Leftrightarrow x_0=\sqrt[n]{a}$. Because $f$ is strictly increasing in $[1,a]$, (because $f'(x)=nx^{n-1}>0$ for every $x\in[1,a]$) equation $f(x)=0$ has unique root in $[1,a]$, that is number $\sqrt[n]{a}$. Applying bisection method we approximate number $\sqrt[n]{a}$, or in other words we solve the equation $x^n=a$.\medskip

(ii) $a\in(0,1)$. Then we apply the above procedure similarly to the function\linebreak $g:[0,1]\ra\R$ with the formula $g(x)=x^n-a$, for every $x\in[0,1]$}.\vspace*{0.5cm} \\
\noindent
{\bf Acknowledgements:}
Many thanks to Vasilli Karali for his contribution in the presentation of  this paper.
\vspace*{1.5cm}
\noindent
Nikos Tsirivas,\\
Department of Mathematics, University of Patras and\\
Department of Marine Engineering, University of West Attica.

\end{document}